\newcommand{\drl}{\partial_{\mathrm{rl}}}
\newcommand{\dSets}{\mathsf{dSets}}
\newcommand{\Fdec}{\mathbb{F}^{\text{dec}}}
\newcommand{\Fin}{\mathsf{Fin}}
\newcommand{\sSets}{\mathsf{sSets}}
\newcommand{\Prop}{\mathsf{Env}}
\newcommand{\Proptensor}{\mathsf{Env}}
\newcommand{\Tree}{\mathsf{\Omega}}
\newtheorem{theorem}{Theorem}[section]
\newtheorem{lemma}[theorem]{Lemma}
\newtheorem{proposition}[theorem]{Proposition}
\theoremstyle{definition}
\newtheorem{definition}[theorem]{Definition}
\newtheorem{example}[theorem]{Example}
\newtheorem{remark}[theorem]{Remark}
\newtheorem*{theorem*}{Theorem}
\newtheorem*{lemma*}{Lemma}
\newtheorem*{notation*}{Notation}
\newtheorem*{proposition*}{Proposition}
\newtheorem*{corollary*}{Corollary}
\newtheorem*{definition*}{Definition}
\newtheorem*{remark*}{Remark}
\setlist[itemize]{leftmargin=0.8cm}
\setlist[enumerate]{leftmargin=0.8cm}
\renewcommand{\@biblabel}[1]{[#1]\hfill}
\providecommand\@dotsep{5}
\renewcommand{\listoftodos}[1][\@todonotes@todolistname]{%
  \@starttoc{tdo}{#1}}
\setlist[itemize]{label=$\diamond$, leftmargin=0cm, itemindent=0.8cm}
\setlist[enumerate]{leftmargin=0cm, itemindent=1cm}
\renewcommand{\@biblabel}[1]{[#1]\hfill}
\def\thmhead@plain#1#2#3{%
  \thmname{#1}\thmnumber{\@ifnotempty{#1}{ }\@upn{#2}}%
  \thmnote{ {\the\thm@notefont#3}}}
\let\thmhead\thmhead@plain
\providecommand\@dotsep{5}
\renewcommand{\listoftodos}[1][\@todonotes@todolistname]{%
  \@starttoc{tdo}{#1}}
\renewcommand{\tocsection}[3]{%
  \indentlabel{\@ifnotempty{#2}{\bfseries\ignorespaces#1 #2\quad}}\bfseries#3}
\renewcommand{\tocsubsection}[3]{%
  \indentlabel{\@ifnotempty{#2}{\ignorespaces#1 #2\quad}}#3}
\def\@tocline#1#2#3#4#5#6#7{\relax
  \ifnum #1>\c@tocdepth 
  \else
    \par \addpenalty\@secpenalty\addvspace{#2}%
    \begingroup \hyphenpenalty\@M
    \@ifempty{#4}{%
      \@tempdima\csname r@tocindent\number#1\endcsname\relax
    }{%
      \@tempdima#4\relax
    }%
    \parindent\z@ \leftskip#3\relax \advance\leftskip\@tempdima\relax
    \rightskip\@pnumwidth plus1em \parfillskip-\@pnumwidth
    #5\leavevmode\hskip-\@tempdima{#6}\nobreak
    \leaders\hbox{$\m@th\mkern \@dotsep mu\hbox{.}\mkern \@dotsep mu$}\hfill
    \nobreak
    \hbox to\@pnumwidth{\@tocpagenum{\ifnum#1=1\bfseries\fi#7}}\par
    \nobreak
    \endgroup
  \fi}
\renewcommand\csname r@tocindent0\endcsname{0pt}
\def\l@subsection{\@tocline{2}{0pt}{2.5pc}{5pc}{}}
\begin{document}

\title{The right cancellation property for certain classes of dendroidal anodynes}

\author{Miguel Barata}
\address{Universiteit Utrecht, Hans Freudenthalgebouw, Budapestlaan 6, 3584 CD Utrecht, The Netherlands}
\email{m.lourencohenriquesbarata@uu.nl}

\maketitle

\vspace{-2.5em}

\begin{abstract}
\leftskip7em
\rightskip7em
   We generalize a previous result of Stevenson \cite{StevensonInnerFibrations} to the category of dendroidal sets, yielding the right cancellation property of dendroidal inner anodynes within the class of normal monomorphisms. As an application of this property, we show how to construct a symmetric monoidal $\infty$-category $\Prop(X)^\otimes$ from a dendroidal $\infty$-operad $X$, in a way that generalizes the symmetric monoidal envelope of a coloured operad. 
\end{abstract}

\vspace{1.5em}

\tableofcontents

\section{Introduction}

The purpose of this note is to give a generalization of the right cancellation property of inner anodynes in the category of simplicial set $\sSets$ to the category of dendroidal sets $\dSets$. We register here the central definition of this work.

\begin{definition}
    Let $\mathcal{M}$ and $\mathcal{N}$ be classes of morphisms of a category $\mathcal{C}$ such that $\mathcal{M}$ is contained in $\mathcal{N}$. Then we say that $\mathcal{M}$ satisfies the \textit{right cancellation property} within $\mathcal{N}$ if given any string of morphisms in $\mathcal{C}$
    $$ A \xrightarrow{i} B \xrightarrow{j} C$$
    such that $i, j, ji$ are in $\mathcal{N}$ and additionally both $i$ and $ji$ are in $\mathcal{M}$, then $j$ is also in $\mathcal{M}$.    
\end{definition}

A knowledge of which classes of simplicial morphisms satisfy the right cancellation property seems to be of particular interest, for instance to simplify certain conbinatorial arguments in $\sSets$. As an example of this phenomenon, one has the well-known result \citep[Prop. 5.34]{HeutsMoerdijkDendroidal} in simplicial homotopy theory which guarantees that, for any saturated class of simplicial morphisms $\mathcal{M}$ satisfying the right cancellation property within the monomorphisms, if all the spine inclusion are in $\mathcal{M}$, then actually all inner anodynes are in $\mathcal{M}$. The upshot of such a result is that, under the circumstances described, one can often argue with spine inclusions in place of inner horn inclusions, which tends to be combinatorially easier to do.

For the sake of easy reference, we register below the simplicial statement we are aiming to generalize.

\begin{theorem}
    The class of inner anodynes in $\sSets$ satisfies the right cancellation property within the class of simplicial monomorphisms.
    \label{right_cancel_simplicial}
\end{theorem}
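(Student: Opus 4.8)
\emph{Strategy.} The plan is to argue via the lifting characterisation of inner anodynes, exploiting the two hypotheses one at a time: the inner anodyne $ji$ will furnish a candidate section, and the inner anodyne $i$ a homotopy correcting it. Since the inner anodynes are precisely the maps with the left lifting property against all inner fibrations, it suffices to solve an arbitrary lifting problem for $j$ against an inner fibration $p\colon E\to S$. Pulling $p$ back along the bottom map $C\to S$ (a pullback of an inner fibration is again one) reduces this to the following situation: given an inner fibration $p\colon E\to C$ and a map $s\colon B\to E$ with $ps=j$, one must extend $s$ to a section of $p$. (Note in passing that the two-out-of-three property already forces $j$ to be a categorical equivalence, hence a trivial cofibration in the Joyal model structure; the point of the theorem is the sharper conclusion that $j$ is inner anodyne.)

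\emph{Producing the data.} First I would use that $ji$ is inner anodyne: the commuting square with top $si\colon A\to E$, left $ji$, right $p$ and bottom $\mathrm{id}_C$ admits a diagonal filler, i.e.\ a section $\sigma\colon C\to E$ of $p$ with $\sigma\circ(ji)=si$. Thus $\sigma$ agrees with $s$ over $A$, though $\sigma\circ j$ need not agree with $s$ over the rest of $B$. To correct this I would use that $i$ is inner anodyne: by the stability of inner anodynes under pushout-product with monomorphisms — applied to $i$ and the cofibration $\partial\Delta^1\hookrightarrow\Delta^1$ — the inclusion $(B\times\partial\Delta^1)\cup(A\times\Delta^1)\hookrightarrow B\times\Delta^1$ is inner anodyne, so filling the resulting square over $p$ yields a homotopy $h\colon B\times\Delta^1\to E$, lying over $j$ via the projection to $B$, from $\sigma\circ j$ to $s$ and constant along $A$.

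\emph{The main obstacle.} It then remains to transport $\sigma$ along $h$: one glues $\sigma$ with $h$ and seeks to extend over $p$ to a homotopy $C\times\Delta^1\to E$ whose far end is the desired section. This is the step I expect to be the crux. The inclusion one is naively led to fill, $(C\times\{0\})\cup(B\times\Delta^1)\hookrightarrow C\times\Delta^1$, is only left anodyne, not inner anodyne, and hence does not lift against the inner fibration $p$; moreover any attempt to break it into pieces by taking pushout-products with monomorphisms merely reproduces a right-cancellation statement of exactly the same kind, so this step cannot be purely formal. The extra input needed is a way of organising the extension so that every horn one fills is an \emph{inner} horn, and here both hypotheses — not just that $ji$ is inner anodyne — must be used essentially, together with their consequence that $A$, $B$ and $C$ have the same set of vertices. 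Concretely I would carry this out inside the pushout $B\sqcup_A C$ — into which $C$ embeds by an inner anodyne map and which admits a retraction onto $C$ restricting to $j$ on the summand $B$ — by means of a careful skeletal induction. This is the part I would invest the most effort in, and it is presumably the combinatorial heart of Stevenson's argument that the remainder of the paper reworks in the dendroidal setting.
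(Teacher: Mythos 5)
Your reduction to a lifting problem against an inner fibration, the construction of the section $\sigma$ from the hypothesis on $ji$, and the construction of the correcting homotopy $h$ from the hypothesis on $i$ all match the argument the paper runs (in its dendroidal form, in the proof of Theorem \ref{right_cancel_dendroidal}). But you have correctly located the crux --- extending $(C\times\{0\})\cup(B\times\Delta^1)\hookrightarrow C\times\Delta^1$ against $p$, which is only left anodyne --- and then left it unresolved, and the direction you propose for it (a ``careful skeletal induction'' inside $B\sqcup_A C$, using that $A$, $B$, $C$ share their vertices) is not the mechanism that makes the proof work. The missing idea is that the homotopy must be, and in fact automatically is, a \emph{pointwise equivalence}: since $i$ is inner anodyne and $p$ is an inner fibration, the map $X^B\to X^A\times_{Y^A}Y^B$ is a trivial Kan fibration (Lemma \ref{J_homotopy}), so $\sigma j$ and $s$ lie in a common fiber, which is a contractible Kan complex; any edge joining them there is an equivalence, hence evaluates to an equivalence in each fiber $X_{j(b)}$ for every vertex $b$ of $B$ (equivalently, one upgrades $h$ to a fibrewise $J$-homotopy rel $A$). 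This is exactly what unlocks the final extension: one filters $\Delta^n\times\Delta^1$ so that every step but the last is inner anodyne, and the last step is a \emph{special outer horn} $\Lambda^n_0\to\Delta^n$ whose initial edge is carried to the equivalence $h(b,-)$, which an inner fibration does lift against by Joyal's lifting theorem (the paper's Theorem \ref{joyal_lifting} and Lemma \ref{technical_lift_joyal} are precisely the dendroidal versions of these two facts). Without invoking invertibility of the homotopy somewhere, no purely combinatorial decomposition of $(C\times\{0\})\cup(B\times\Delta^1)\hookrightarrow C\times\Delta^1$ can succeed --- as you yourself observe, it only reproduces the statement being proved --- so as written the proposal has a genuine gap at its central step.

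(Your side remark that the two-out-of-three property already makes $j$ a trivial cofibration in the Joyal model structure is correct, and correctly flagged as weaker than the conclusion sought.)
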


The first account and proof of Theorem \ref{right_cancel_simplicial} appears in \cite{StevensonInnerFibrations} as Theorem 1.5 and seems to have gone mostly unnoticed in the literature on simplicial sets. In contrast, the right cancellation property for simplicial left and right anodynes is more well-known: it was already present in Joyal's work on simplicial sets and quasicategories, for instance. 

In what concerns the proof of the right cancellation property for simplicial left and right anodyne maps, the interested reader can check \citep[Cor. 4.1.2.2]{LurieHTT} and \citep[Cor. 8.80]{HeutsMoerdijkDendroidal} for instance. In any case, we want to mention that these proofs are not entirely trivial, and rely on certain properties of left and right anodynes deduced from the existence of the contravariant and covariant model structures on the slice categories $\sSets_{/ V}$. In contrast, this argument cannot be followed through in the inner anodyne scenario, due to the corresponding analogues of the contravariant and covariant model structures not existing. In this sense, Stevenson's proof turns to be a better approach: it actually also works (and simplifies somewhat) if one wants to show the right cancellation property for left and right anodynes.

As we have already mentioned, our intention is to generalize Stevenson's result to the context of dendroidal sets. This is a category of presheaves on the tree category $\Tree$, first introduced by Moerdijk and Weiss in \cite{MoerdijkWeissDendroidal}, and this category play a role in the theory of operads analogous to the one played by $\sSets$ in category theory. After giving a brief account in Section 2 of some notation and results on dendroidal sets, we will dedicate Section 3 to proving the following generalization of Theorem \ref{right_cancel_simplicial}.

\begin{theorem}
    Inner/leaf/root anodynes in $\dSets$ satisfy the right cancellation property within the class of normal monomorphisms.
    \label{right_cancel_dendroidal}
\end{theorem}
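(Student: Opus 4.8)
The plan is to show that $j$ has the left lifting property against the appropriate class of fibrations. Fix one of the three anodyne classes, write it $\mathcal{A}$ (dendroidal inner, leaf, or root anodynes), and let $\mathcal{F}$ denote its right complement (inner, leaf, resp.\ root fibrations), so that $(\mathcal{A},\mathcal{F})$ is a cofibrantly generated weak factorisation system on $\dSets$: the class $\mathcal{A}$ is saturated, generated by the corresponding horn inclusions --- which are normal monomorphisms --- and, as recorded in Section~2, $\mathcal{A}$ is stable under pushout-product with arbitrary normal monomorphisms, in particular with the boundary inclusions $\partial\Delta^n\hookrightarrow\Delta^n$ under the simplicial tensoring on $\dSets$. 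Given $A\xrightarrow{i}B\xrightarrow{j}C$ with $i,ji,j$ normal monos and $i,ji\in\mathcal{A}$, factor $j$ by the small object argument as $B\xrightarrow{k}C'\xrightarrow{p}C$ with $k\in\mathcal{A}$ and $p\in\mathcal{F}$. Since $\mathcal{A}$ is closed under retracts, it is enough to realise $j$ as a retract of $k$, and for that it suffices to produce a morphism $r\colon C\to C'$ with $rj=k$ and $pr=\mathrm{id}_C$: the square with identities along the top and $r,p$ along the bottom then displays the retract.

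For a first approximation to $r$, note that $ki=k\circ i\in\mathcal{A}$ by closure under composition, that $ji\in\mathcal{A}$ by hypothesis, and that $p(ki)=ji$. Since $p\in\mathcal{F}$, the square with upper edge $ki\colon A\to C'$, lower edge $A\xrightarrow{ji}C\xrightarrow{\mathrm{id}}C$, and right edge $p$ admits a diagonal filler $r_0\colon C\to C'$ with $r_0\circ(ji)=ki$ and $pr_0=\mathrm{id}_C$. Thus $r_0$ is already a section of $p$, while $r_0\circ j$ and $k$ are two maps $B\to C'$ that agree after precomposition with $i$ --- that is, on the image of $A$ --- and that both lie over $j\colon B\to C$. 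It remains to correct $r_0$ so that $r_0j$ becomes equal to $k$ on the nose while $pr_0=\mathrm{id}_C$ is preserved; the agreement on $A$ then takes care of itself.

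This correction is the heart of the proof, and it is here that the argument genuinely sees the shape of $\mathcal{A}$. Stability of $\mathcal{A}$ under pushout-product with the boundary inclusions, combined with $p\in\mathcal{F}$, shows that the restriction map $\mathrm{Map}_{/C}(B,C')\to\mathrm{Map}_{/C}(A,C')$ of simplicial mapping objects over $C$ is a trivial Kan fibration; its fibre over $r_0|_A$ is a contractible Kan complex having $r_0j$ and $k$ among its vertices, so the two maps are joined by a path in the space of maps $B\to C'$ over $C$ restricting to $r_0|_A$ on $A$.

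The problem is to propagate this path to a genuine modification of $r_0$ over all of $C$. For leaf and root anodynes this step simplifies, much as in the classical simplicial argument for left and right anodynes: there the relevant fibrations admit enough path-lifting to transport $r_0$ along the homotopy directly (the pertinent pushout-product of $\{0\}\hookrightarrow\Delta^1$ with $B\hookrightarrow C$ being leaf, resp.\ root anodyne). For inner anodynes no such transport exists --- the pushout-product of $\{0\}\hookrightarrow\Delta^1$ with $B\hookrightarrow C$ is only leaf anodyne, not inner anodyne, and there is no ``inner'' model structure on $\dSets_{/C}$ --- and this is exactly the difficulty Stevenson overcomes simplicially. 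The plan is to adapt his argument: after reducing, via the skeletal filtration of $i$, to the case where $i$ is a single horn-cell $A\hookrightarrow A\cup_{\Lambda^e[S]}\Omega[S]$, one inducts over the skeletal (tree-size) filtration of $C$ relative to $B$, filling the newly attached faces at each stage by playing them against the $\mathcal{A}$-structure of $ji\colon A\to C$. I expect the principal obstacle to be precisely this last point: the combinatorics of faces and horns of trees --- which takes the place of the elementary simplex combinatorics in Stevenson's bookkeeping --- is substantially more involved, and making his induction run in $\dSets$ is the real content of the theorem.
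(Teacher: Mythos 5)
Your overall architecture is the right one and matches the paper's (and Stevenson's): reduce to a lifting/retract problem, produce a first approximation $r_0$ from the hypothesis that $ji$ is anodyne, observe that $r_0j$ and $k$ agree over $A$ and over $C$ and hence are joined by a homotopy because the restriction map of mapping objects is a trivial Kan fibration (this is the paper's Lemma \ref{J_homotopy}), and then transport $r_0$ along that homotopy. Your treatment of the leaf and root cases is also correct and is exactly what the paper does: there the pushout-product of $\{0\}\hookrightarrow J$ with the normal mono $B\to C$ is itself leaf (resp.\ root) anodyne, so the transport is free.

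The gap is the transport step for inner anodynes, which you explicitly defer (``I expect the principal obstacle to be precisely this last point''). The mechanism you sketch --- reduce $i$ to a single horn cell and induct over the skeleton of $C$ rel $B$, ``playing the new faces against the $\mathcal{A}$-structure of $ji$'' --- is not how the difficulty is resolved, and as stated it does not obviously close: the cells one must fill are not faces of $C$ but cells of the tensor $C_1\otimes T$ for each tree $T$ attached in the skeletal filtration of $j$ (not of $i$; nothing beyond Lemma \ref{J_homotopy} is needed from $i$). The missing idea is the paper's Lemma \ref{technical_lift_joyal}: first, arrange the homotopy to be a fibrewise $J$-homotopy, so that for every colour $b$ the $1$-corolla $H(-,b)$ is an \emph{equivalence} in the fibre; second, invoke the Cisinski--Moerdijk filtration of $0\otimes T\cup_{0\otimes\partial T}C_1\otimes\partial T\hookrightarrow C_1\otimes T$ (Theorem B.2 of \cite{CisinskiMoerdijk}), in which every stage but the last is inner anodyne and the last stage is a pushout of a \emph{root} horn $\Lambda^vS\to S$ at a unary root vertex $v$ lying over $C_1\otimes r_T$; and third, fill that one non-inner horn using the dendroidal Joyal lifting theorem (Theorem \ref{joyal_lifting}), which applies precisely because $v$ is sent to an equivalence. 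Without identifying this single special outer horn and the equivalence condition that permits filling it against a mere inner fibration, the inner case is not proved; everything else in your write-up is sound.
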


Here the classes of leaf and root anodynes play a similar role in the dendroidal formalism as that of left and right anodynes in $\sSets$, respectively. We remark that the right cancellation property for leaf anodynes is already known to hold: for instance, one can easily adapt the simplicial argument presented in \cite{HeutsMoerdijkDendroidal} to the dendroidal context, using the existence of the version of the covariant model structure for the overcategories $\dSets_{/V}$. However, the version of Theorem \ref{right_cancel_dendroidal} about inner and root anodynes doesn't seem to be present in the literature\footnote{We note that, for the simplicial context, the result for left anodynes immediately implies the result for right anodynes, due to the existence of the involution $\mathsf{\Delta} \to \mathsf{\Delta}$ which reverses the order of each poset. However, there is no parallel in $\Tree$ of this involution, and therefore the leaf anodyne result doesn't imply the root anodyne one.}.

In Section 4 we give an application of the right cancellation property of dendroidal inner anodynes. We explain how one can assign to a dendroidal $\infty$-operad $X$ a simplicial set $\Proptensor(X)^\otimes$ which recovers the usual construction of the symmetric monoidal envelope of a coloured operad. The exact result we will show is the following:

\begin{theorem}
    Let $X$ be a dendroidal set. Then there exists a simplicial set $\Proptensor(X)^\otimes$ satisfying the following properties:
    \begin{enumerate}[label=(\alph*)]
        \item If $X$ is an $\infty$-operad, then $\Proptensor(X)^{\otimes}$ is a symmetric monoidal $\infty$-category in the sense of Lurie.
        \item If $\mathbf{P}$ is a coloured operad and $N\mathbf{P} \in \dSets$ is the corresponding dendroidal nerve, then $\Proptensor(N\mathbf{P})^\otimes$ is the symmetric monoidal $\infty$-category associated to the symmetric monoidal category $\Proptensor(\mathbf{P})$ corresponding to the symmetric monoidal envelope of $\mathbf{P}$.
    \end{enumerate}
    \label{Prop_main_theorem}
\end{theorem}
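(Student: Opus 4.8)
The plan is to produce $\Proptensor(X)^\otimes$ by an explicit, nerve-type formula and then to verify the two asserted properties by translating the defining simplicial lifting problems over $N\Fin_*$ into dendroidal lifting problems that an $\infty$-operad can solve. One natural construction: let the $k$-simplices of $\Proptensor(X)^\otimes$ be pairs $(\sigma,\varphi)$ consisting of a $k$-simplex $\sigma\colon\Delta^k\to N\Fin_*$ and a map $\varphi\colon D_\sigma\to X$ of dendroidal sets, where $D_\sigma$ is obtained by grafting corollas according to the input/output data encoded by the chain $\sigma$ (a tree, or a finite coproduct of trees when the intermediate objects of $\sigma$ have more than one non-basepoint element, and with inert morphisms in $\sigma$ contributing only reindexings). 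This is, essentially, the forest-set-level envelope of $X$ run through a suitable functor $\fSets\to\sSets_{/N\Fin_*}$; in any case the projection $p\colon\Proptensor(X)^\otimes\to N\Fin_*$ is evident, and one checks routinely that $\Proptensor(-)^\otimes$ sends normal monomorphisms to monomorphisms. For part (b): when $X=N\mathbf{P}$, a map $D_\sigma\to N\mathbf{P}$ is by adjunction a decoration of the corollas of $D_\sigma$ by operations of $\mathbf{P}$, i.e. exactly a $k$-simplex lying over $\sigma$ in the nerve of the Grothendieck construction of the symmetric monoidal envelope $\Proptensor(\mathbf{P})$ of $\mathbf{P}$; summing over $\sigma$ identifies $\Proptensor(N\mathbf{P})^\otimes$ with that nerve, and since the nerve carries coCartesian fibrations of categories to coCartesian fibrations of simplicial sets this is precisely the symmetric monoidal $\infty$-category associated to $\Proptensor(\mathbf{P})$. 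So part (b) is bookkeeping; the substance is part (a).

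Now suppose $X$ is an $\infty$-operad, i.e. $X$ has the right lifting property against all dendroidal inner horn inclusions, equivalently against all dendroidal inner anodynes. Since $N\Fin_*$ is the nerve of a category, to conclude that $\Proptensor(X)^\otimes$ is a symmetric monoidal $\infty$-category it suffices to show that $p$ is (i) an inner fibration, (ii) equipped with $p$-coCartesian lifts of every edge, and (iii) such that the Segal maps $\Proptensor(X)^\otimes_{\langle n\rangle}\to(\Proptensor(X)^\otimes_{\langle 1\rangle})^{\times n}$ are equivalences. Condition (iii) is immediate from the construction: the fibre over $\langle n\rangle$ is, by design, the $n$-fold product of the fibre over $\langle 1\rangle$ (the monoidal product being concatenation), so the Segal maps are isomorphisms. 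The $p$-coCartesian lifts over inert morphisms are equally formal (restriction to a subsequence of colours), and since every morphism of $\Fin_*$ factors as an inert morphism followed by an active one, (ii) reduces to producing $p$-coCartesian lifts over active morphisms. So the real content of (a) is the inner-fibration statement and the active case of (ii) --- and here Theorem \ref{right_cancel_dendroidal} is the key tool.

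To see that $p$ is an inner fibration, unwind a lifting problem for an inner horn $\Lambda^m_\ell\hookrightarrow\Delta^m$ ($0<\ell<m$) against $p$. Fixing the $m$-simplex $\sigma$ of $N\Fin_*$ below it, and reducing via inert/active factorization (and working componentwise on the trees comprising $D_\sigma$) to the case where $\sigma$ is a chain of active morphisms, the problem becomes a dendroidal extension problem $E_\sigma\hookrightarrow\Omega[T_\sigma]$, $E_\sigma\to X$, where $E_\sigma=\bigcup_{\sigma'}\Omega[T_{\sigma'}]\subseteq\Omega[T_\sigma]$ is the union over the faces $\sigma'$ of $\sigma$ lying in the horn. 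The normal monomorphism $E_\sigma\hookrightarrow\Omega[T_\sigma]$ need not be an inner horn inclusion, but it sits in a string of normal monomorphisms
\[ \mathrm{Sp}[T_\sigma]\hookrightarrow E_\sigma\hookrightarrow\Omega[T_\sigma], \]
where, because $m\geq 2$, each edge of the spine of $\Delta^m$ already lies in a face belonging to the horn, so every corolla of $T_\sigma$ belongs to $E_\sigma$ and hence $\mathrm{Sp}[T_\sigma]\subseteq E_\sigma$; the composite $\mathrm{Sp}[T_\sigma]\hookrightarrow\Omega[T_\sigma]$ is a dendroidal spine inclusion, hence inner anodyne, and $\mathrm{Sp}[T_\sigma]\hookrightarrow E_\sigma$ is inner anodyne by a skeletal filtration attaching the subtrees $T_{\sigma'}$ along their own spine inclusions. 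By Theorem \ref{right_cancel_dendroidal}, $E_\sigma\hookrightarrow\Omega[T_\sigma]$ is therefore dendroidal inner anodyne, so the map $E_\sigma\to X$ extends along it because $X$ is an $\infty$-operad, solving the original lifting problem. The active case of (ii) is handled in the same way: $p$-coCartesianness of the evident concatenation lift amounts to a relative lifting condition of outer-horn type which, transported into $X$, is once again settled by inserting the resulting normal monomorphism into a string with spine inclusions and invoking the right cancellation property.

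I expect the main obstacle to be the bookkeeping in this translation. Two points demand genuine care. First, one must check that passing from a simplicial horn in $\Proptensor(X)^\otimes$ --- together with the choice of underlying simplex $\sigma$, allowing degenerate simplices and keeping track of the inert/active factorization --- really does yield \emph{normal} monomorphisms $E_\sigma\hookrightarrow\Omega[T_\sigma]$, so that Theorem \ref{right_cancel_dendroidal}, stated as it is within the normal monomorphisms, actually applies. Second, the claim that $\mathrm{Sp}[T_\sigma]\hookrightarrow E_\sigma$ is inner anodyne is itself a pushout-product style induction on the subtrees of $T_\sigma$, of precisely the sort packaged by the dendroidal counterpart of \citep[Prop. 5.34]{HeutsMoerdijkDendroidal}; carrying that induction out, together with its analogue for the outer-horn/coCartesian condition, is the most delicate part. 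Once these are in place the rest is formal: the closure properties of inner anodynes and a single application of the right cancellation property of Theorem \ref{right_cancel_dendroidal}.
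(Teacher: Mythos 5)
Your overall strategy --- decorating forests assembled from chains in $\Fin_\ast$ with $X$-data, and reducing the simplicial inner-horn lifting problems to dendroidal extension problems solved by comparison with spine inclusions and the right cancellation property of Theorem \ref{right_cancel_dendroidal} --- is the same as the one used here (Lemmas \ref{rl_inner} and \ref{horn_inner}). But the construction of $\Proptensor(X)^\otimes$ you give has a genuine defect: you index the $k$-simplices over $N\Fin_\ast$ alone, so a vertex over $\langle n\rangle$ is a map $n\cdot\eta\to X$, i.e.\ an $n$-tuple of single colours, and the fibre over $\langle 1\rangle$ has objects the colours of $X$ and morphisms its unary operations --- that is, the underlying $\infty$-category of $X$, not the envelope. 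This breaks both halves of the theorem. A $p$-cocartesian lift of the active map $\langle 2\rangle\to\langle 1\rangle$ starting at $(c_1,c_2)$ would have to be a binary operation $c_1,c_2\to d$ through which every operation out of $(c_1,c_2)$ factors essentially uniquely, i.e.\ an internal tensor product of colours, which a general $\infty$-operad does not possess; and for $X=N\mathbf{P}$ your fibre over $\langle 1\rangle$ is not $\Prop(\mathbf{P})$, whose objects are \emph{tuples} of colours (for $\mathbf{P}=\mathbf{Ass}$ your fibre over $\langle 1\rangle$ is the trivial category, while $\Prop(\mathbf{Ass})$ is not). The fix is to enlarge the indexing data so that a vertex over $\langle n\rangle$ is an $n$-tuple of tuples of colours, and, more generally, so that every layer of edges of the forest carries a partition compatible with the underlying maps in $\Fin_\ast$. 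This is precisely the role of the category $\Fdec$ of decorated forests in Definition \ref{Infinity_Prop}; the partitions are inert bookkeeping and do not disturb your anodyne arguments, but without them the object you build is not the envelope.

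A second, smaller problem: you propose to verify the cocartesianness condition (lifting against $\Lambda^0[m]\to\Delta[m]$ relative to the chosen initial edge) by the same device of inserting the resulting normal monomorphism into a string with spine inclusions and applying right cancellation. That cannot work as stated: Theorem \ref{right_cancel_dendroidal} only ever certifies a map as \emph{inner} anodyne, and the dendroidal realization of an outer simplicial horn is not inner anodyne. In the corrected construction the cocartesian edges are concatenation-followed-by-degeneracy edges, so the corresponding forest consists of $1$-corollas and the required filler can be written down explicitly using a degeneracy, as in the proof of Proposition \ref{prop_x_cat}; alternatively one could argue via a special outer-horn lifting in the style of Theorem \ref{joyal_lifting}, since the initial edge is degenerate and hence an equivalence. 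Your treatment of the inner-fibration condition, by contrast, is essentially sound: the string $\mathrm{Sp}(T_\sigma)\to E_\sigma\to T_\sigma$ with inner anodyne composite, plus an induction for the first map, is a legitimate reorganization of Lemmas \ref{rl_inner} and \ref{horn_inner}, though the induction must also account for the outer simplicial faces $\partial_0$ and $\partial_k$, whose effect on a forest (root faces, leaf faces, and deletion of uprooted components) is not an inner face of a single tree.
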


The main technical hurdle for proving this result will be handled in Section 4.1, where we use the right cancellation property of dendroidal inner anodynes to investigate a certain category of forests $\mathbb{F}$ constructed from $\Fin_\ast$. Finally, in Section 4.2 we will use these dendroidal considerations to construct $\Proptensor(X)^\otimes$ and prove Theorem \ref{Prop_main_theorem}.

\vspace{0.2cm}

\noindent \textbf{Acknowledgements.} The author was supported by the Dutch Research Council (NWO) through the grant 613.009.147. The author would like to thank Ieke Moerdijk for helpful feedback on earlier drafts of this work.

\section{Some recollections about dendroidal sets}

Before embarking on proving Theorem \ref{right_cancel_dendroidal}, we will first quickly recall some definitions and notation regarding the tree category $\Tree$, as well as some technical results that will be used in the rest of the text. For a textbook account of the subject and a more thorough presentation of some of the definitions below, see \cite{HeutsMoerdijkDendroidal}.

The tree category $\Tree$ has objects given by non-planar rooted trees $T$ with finite vertex and edge sets, together with a specified special edge which is attached to a single vertex, and we call this edge the \textit{root} of $T$. As explained in Section 1.3 of \cite{HeutsMoerdijkDendroidal}, every tree $T$ gives rise to a coloured operad $\Tree(T)$, and we define the morphism set $\Tree(T,S)$ as the the set of all operad maps $\Tree(T) \to \Tree(S)$. We specify some notation which is useful when discussing trees:
\begin{itemize}[label=$\diamond$]
    \item Any edge in $T$ that is adjacent to two vertices is called an \textit{inner edge}; the edges which are neither inner nor the root edge are called the \textit{leaves}.
    \item The edge set $E(T)$ has a poset structure by setting $e_1 \leq e_2$ when the (unique) path from $e_2$ to the root edge contains $e_1$. The minimal element of this ordering is the root edge, and we call the maximal edges the \textit{leaves} of $T$. A similar ordering can also be defined on the vertex set $V(T)$, with the minimal element being the \textit{root vertex} and the maximal elements being the \textit{leaf vertices}.
    \item The set of edges incident to a vertex $v$ has a unique minimal element called the \textit{outcoming edge} of $v$, and the remaining edges, which are pairwise independent in the ordering above, are the \textit{incoming edges} of $v$. In the scenario when $v$ admits no incoming edges, we say that the vertex $v$ is a \textit{stump}.
    \item the tree which is just a single edge will be represented by $\eta$. The trees with a single vertex and $n$ incoming edges are the \textit{$n$-corollas} and will be denoted throughout by $C_n$. As an example, here is a picture of $C_4$:
    \vspace{0.2cm}
    \begin{center}
\begin{tikzpicture}
    \draw ({-sin(22.5)},1)--(0,0)--({sin(22.5)},1);
     \filldraw[black] (0,0) circle (1.5pt);
     \draw ({-sin(60)},1)--(0,0)--({sin(60)},1);
     \draw (0,0) -- (0,-1);    
\end{tikzpicture}.
\end{center}
\end{itemize}

\begin{example}

Consider the following example of an object $T$ in $\Tree$:

    \begin{center}
\begin{tikzpicture}[scale=0.75]
    \draw (0,1)--(0,0)--(0,-1);
     \filldraw[black] (0,0) circle (1.5pt);
    \draw (-1.5, 1)--(0,0);
    \filldraw[black] (-1.5,1) circle (1.5pt);
    \draw (-2, 2) -- (-1.5, 1) -- (-1, 2);
    \draw (1.5, 1)--(0,0);
    \filldraw[black] (1.5,1) circle (1.5pt);
    \draw ({1.5-sin(60)}, 2)--(1.5, 1)--({1.5+sin(60)}, 2);
    \draw (1.5,2)--(1.5, 1);
    \filldraw[black] (1.5,2) circle (1.5pt);
    \filldraw[black] ({1.5-sin(60)},2) circle (1.5pt);
    \draw ({1.5-sin(60)-sin(30)},3)--({1.5-sin(60)},2)--({1.5-sin(60)+sin(30)},3);
    \draw ({1.5-sin(60)},3)--({1.5-sin(60)},2);
    \node [label={[xshift=0.2cm, yshift=-0.7cm, scale=0.9]$r$}] {};
    \node [label={[xshift=-1.4cm, yshift=0.4cm, scale=0.9]$v_1$}] {};
    \node [label={[xshift=0.15cm, yshift=1.2cm, scale=0.9]$v_2$}] {};
    \node [label={[xshift=0.9cm, yshift=1.25cm, scale=0.9]$u$}] {};
    \node [label={[xshift=-0.8cm, yshift=-0.15cm, scale=0.9]$f$}] {};
    \node [label={[xshift=0.75cm, yshift=-0.1cm, scale=0.9]$g$}] {};
    \node [label={[xshift=1cm, yshift=0.85cm, scale=0.9]$e$}] {};
    \node [label={[xshift=0.55cm, yshift=0.8cm, scale=0.9]$h$}] {};
   \node [label={[xshift=-0.3cm, yshift=-0.35cm, scale=0.9]$v_r$}] {};
\end{tikzpicture}
\end{center}

The root edge corresponds to the edge $r$, whereas the edges $e,f, g$ and $h$ are the inner edges of the tree. The edges which are connected to exactly one vertex are the leaves of $T$. As for the vertices, $v_1$ and $v_2$ are  the leaf vertices of $T$, $u$ is the only stump and $v_r$ is the root vertex.
\label{example}
\end{example}

In a way that mirrors the description of the morphisms in $\mathsf{\Delta}$ via face and degeneracy maps satisfying the simplicial relations, one can show that the morphisms in $\Tree$ are generated under composition by the classes of maps below:
\begin{itemize}[label=$\diamond$]
    \item For every inner edge $e$ of $T$, we have an associated \textit{inner face} $\partial_e T \to T$ which comes from contracting $e$.
    \item For every leaf vertex $v$, there is a \textit{leaf face} $\partial_v T \to T$ coming from removing this vertex from $T$.
    \item Suppose the root vertex of $T$ has the property that one and only one of its incoming edge is also an inner edge. Then $T$ admits a \textit{root face} $\partial_{\mathrm{root}} T \to T$, where the domain is the tree resulting from removing the root vertex of $T$ and keeping the rest of $T$.
    \item If $e$ is any edge of $T$, there is a degeneracy map $\sigma_e T \to T$ which subdivides $e$ in half by adding a new vertex.
    \item The isomorphisms $T \xrightarrow{\cong} S$.
\end{itemize}

We will usually call the faces and degeneracy maps defined above the \textit{elementary faces} and \textit{elementary degeneracies} of a tree.

These morphisms, together with the dendroidal identities in \citep[Sec. 3.3.4]{HeutsMoerdijkDendroidal}, give a complete description of the tree category $\Tree$ in terms of generators and relations. The main motivation for considering this category in the first place is that its presheaves, which form the category $\dSets$ of \textit{dendroidal sets}, correctly model the algebraic structure of an operad. More explicitly, there exists a functor 
$$ N \colon \mathsf{Op} \longrightarrow \dSets,$$
where $\mathsf{Op}$ is the category of coloured operads, which sends a coloured operad $\mathbf{P}$ to a dendroidal set $N\mathbf{P}$, which we will call the \textit{dendroidal nerve} of $\mathbf{P}$ \citep[Ex. 4.2]{MoerdijkWeissDendroidal}, resembling the usual construction of the nerve of a small category. We also point out that there is a pair of adjoint functors
$$\begin{tikzcd}
	{\dSets} & {\sSets}
	\arrow["{\iota^*}"', shift right, from=1-1, to=1-2]
	\arrow["{\iota_!}"', shift right, from=1-2, to=1-1]
\end{tikzcd}$$
comparing simplicial and dendroidal presheaves. This is induced by the inclusion of the full subcategory of $\Tree$ spanned by the trees with only vertices with exactly one incoming edge (which we usually call the \textit{linear trees}), which is isomorphic to $\mathsf{\Delta}$.

Equipped with this new formalism, one can easily transfer some of the notions from the homotopy theory of simplicial sets to the world of dendroidal sets. Let us point out some of these analogous constructions:
\begin{itemize}[label=$\diamond$]
    \item Any tree $T$ admits a \textit{boundary} $\partial T \to T$, given as the union of all the faces of $T$.
    \item Any inner edge $e$ leads to an \textit{inner horn} $\Lambda^e T \to T$, defined as the union of all elementary faces except for $\partial_e T$.
    \item Any leaf vertex $v$ leads to a \textit{leaf horn} $\Lambda^v T \to T$, defined as the union of all elementary faces except for $\partial_v T$.
    \item If $T$ admits a root face, then the \textit{root horn} $\Lambda^{v} T \to T$ corresponds to the union of all faces except the root face. Here $v$ denotes the root vertex.
    \item A dendroidal map $f\colon X \to Y$ is a \textit{normal monomorphism} if it is levelwise injective and, for every tree $T$, the group of automorphisms of $T$ acts freely on $Y_T - \mathrm{im}(f)$.
    \item A dendroidal map is \textit{inner anodyne} if it can be obtained from the inner horn inclusions $\Lambda^e T \to T$ via pushouts, transfinite compositions and retracts (that is, the class of inner anodynes is the saturated class generated by the dendroidal inner horn inclusions). A similar definition applies for \textit{leaf/root anodynes}, which form the saturated class generated by inner horn inclusions and leaf/root horn inclusions.
    \item An \textit{inner fibration} is a dendroidal map $X \to Y$ that has the right lifting property with respect to all inner horn inclusions. A similar definition applies for dendroidal \textit{leaf/right fibrations} in terms of leaf/root anodynes.
\end{itemize}

The dendroidal formalism also allows us to define $\infty$-operads, which we record in the definition below.
\begin{definition}
    Let $X$ be a dendroidal set. We say $X$ is an \textit{$\infty$-operad} if, for any inner horn inclusion $\Lambda^e T \to T$ and solid diagram
\[\begin{tikzcd}[cramped]
	{\Lambda^e T} & X, \\
	T
	\arrow[from=1-1, to=1-2]
	\arrow[from=1-1, to=2-1]
	\arrow[dashed, from=2-1, to=1-2]
\end{tikzcd}\]
    a diagonal dashed lift exists. The \textit{underlying $\infty$-category} of $X$ is the simplicial set $\iota^* X$, which is indeed an $\infty$-category by \citep[Lem. 6.2]{HeutsMoerdijkDendroidal}.
\end{definition}

Before finishing this section on background on dendroidal sets, we will mention some results which we will use in the next section. We recall that given an $\infty$-operad $X$, we say $x \in X_{C_1}$ is an \textit{equivalence} is it defines an equivalence in the underlying $\infty$-category of $X$.

\begin{theorem}
    Let $T$ be a tree with at least two vertices and a unary root vertex $v$ (that is, $v$ has only one incoming edge). Given $p\colon X \to Y$ an inner fibration of $\infty$-operads and a commutative square
    $$
\begin{tikzcd}
\Lambda^v T \arrow[d] \arrow[r, "f"] & X \arrow[d, "p"] \\
T \arrow[r]                     & Y               
\end{tikzcd}
    $$
    such that $f$ sends $v$ to an equivalence in $X$, then the diagram admits a diagonal lift.
    \label{joyal_lifting}
\end{theorem}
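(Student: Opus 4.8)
This is the dendroidal analogue of Joyal's special outer-horn lifting lemma for inner fibrations of $\infty$-categories; indeed, when $T$ is the linear tree with two vertices one has $\Lambda^v T \cong \Lambda^2_0$ and the hypothesis on $v$ says exactly that the edge $\Delta^{\{0,1\}}$ is sent to an equivalence. The strategy is to reduce the lifting problem to the invertibility of $f(v)$. Since the root vertex $v$ is unary, let $e_0$ be its unique incoming edge; as $T$ has a second vertex, $e_0$ is an inner edge and $T$ is the grafting $T = C_1 \star_{e_0} T'$ of the corolla $C_1$ at $v$ onto the subtree $T' := \partial_{\mathrm{root}}T$ along $e_0$. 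Both $C_1$ and the inner face $\partial_{e_0}T$ (abstractly a copy of $T'$) lie inside $\Lambda^v T$, so the restriction of $f$ already provides the equivalence $g := f|_{C_1}\colon f(e_0)\to f(r)$ and a $T'$-dendrex $\sigma := f|_{\partial_{e_0}T}$ of $X$ with root colour $f(r)$, whereas a lift $T\to X$ amounts, in part, to a $T'$-dendrex with root colour $f(e_0)$; morally, the lift is obtained by pre-composing $\sigma$ with $g^{-1}$.

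To make this precise I would use the standard fact that the grafting (Segal-core) inclusion $C_1\cup_\eta T' \hookrightarrow T$ is inner anodyne, so that $p$, being an inner fibration, has the right lifting property against it; this reduces producing the lift to constructing a $T'$-dendrex $\tau'$ of $X$ with root colour $f(e_0)$, lying over the appropriate $T'$-dendrex of $Y$ and whose faces match the remaining restrictions of $f$ on $\Lambda^v T$. I would then build $\tau'$ by induction on the number of vertices of $T$. In the base case $T = C_1\star C_n$ (two vertices) the only constraints on $\tau'$ are its value on $\partial_{e_0}T$ and, when $n=1$, on one outer edge; for $n=1$ this is precisely Joyal's lemma applied to the inner fibration $\iota^*X \to \iota^*Y$ of $\infty$-categories, while for general $n$ one argues identically, picking a homotopy inverse $\bar g$ of $g$ together with a $2$-dendrex of $X$ over $Y$ witnessing $g\circ\bar g \simeq \mathrm{id}$ and then filling inner horns in $X$ (legitimate since $X$ is an $\infty$-operad and $p$ an inner fibration) to obtain the required $C_n$-operation with output $f(e_0)$ and the ambient $T$-dendrex over $Y$. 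For the inductive step I would strip off a leaf corolla of $T'$: its leaf face in $T$ already carries $f$, so re-attaching the transported dendrex amounts to a finite sequence of inner-horn lifting problems together with one lifting problem that again features a tree with a unary root vertex sent to an equivalence, which the inductive hypothesis disposes of. Assembling $(g,\tau')$ and lifting through the surjection above yields the desired $\tau\colon T\to X$.

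The main obstacle is the bookkeeping in the inductive step — arranging the $g^{-1}$-transport to be compatible simultaneously with every face of $\Lambda^v T$ and with $p$. The cleanest way to organize this is to pass to marked dendroidal sets: one shows that the marked inclusion $(\Lambda^v T,\mathcal{E}) \hookrightarrow (T,\mathcal{E})$, where $\mathcal{E}$ marks the degenerate $1$-corollas together with the corolla at $v$, is marked anodyne, and that an inner fibration of $\infty$-operads, with its equivalences marked, has the right lifting property against such inclusions. Either presentation works; the first keeps the combinatorics visible, the second packages it into a single lifting lemma.
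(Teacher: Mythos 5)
This statement is not proved in the paper at all: it is quoted as Theorem 4.2 of Cisinski--Moerdijk, so there is no in-paper argument to compare yours against. Your opening reduction is sound as far as it goes: the root corolla at $v$ and the inner face $\partial_{e_0}T\cong T'$ do both lie in $\Lambda^v T$, the grafting inclusion $C_1\cup_\eta T'\to T$ is indeed inner anodyne, and the ``pre-compose with $g^{-1}$'' heuristic is the right picture; the base case $T=C_1\star C_1$ really is Joyal's lemma for $\iota^*X\to\iota^*Y$.

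However, there are two genuine gaps. First, lifting against $C_1\cup_\eta T'\to T$ only produces \emph{some} $T$-dendrex extending a chosen pair $(g,\tau')$; it gives you no control over the restriction of that dendrex to the remaining faces of $T$ (the leaf faces and the inner faces other than $\partial_{e_0}$), which is exactly the constraint imposed by the horn $\Lambda^v T$. To get a filler of the horn you need a filtration of $\Lambda^v T\hookrightarrow T$ whose successive stages are pushouts of inner horns except for one ``special'' stage where the equivalence is used, and constructing and verifying that filtration is where essentially all of the work in Cisinski--Moerdijk's proof lies; your inductive step (``re-attaching the transported dendrex amounts to a finite sequence of inner-horn lifting problems together with one lifting problem that the inductive hypothesis disposes of'') asserts the existence of such a decomposition rather than exhibiting it, and it is not obvious that stripping a leaf corolla of $T'$ produces lifting problems of the stated form with all boundary compatibilities intact. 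Second, the proposed ``cleanest'' route via marked dendroidal sets is circular as stated: the assertion that an inner fibration of $\infty$-operads, with equivalences marked, has the right lifting property against $(\Lambda^v T,\mathcal{E})\to(T,\mathcal{E})$ \emph{is} the theorem; declaring that inclusion marked anodyne and that fibrations lift against marked anodynes just renames the problem unless you independently prove one of the two halves. So the skeleton of your argument matches the known proof in spirit, but the combinatorial core is missing rather than merely compressed; for this paper's purposes the correct move is simply to cite Cisinski--Moerdijk, as the author does.
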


This was originally proved by Cisinski--Moerdijk in \citep[Thm. 4.2]{CisinskiMoerdijk}.

In the theory of simplicial sets, a necessary and crucial step in analysing the behaviour of the different classes of anodyne maps is to consider how these interact with the cartesian product in $\sSets$. A similar situation holds for the category of dendroidal sets, but it is also useful in a lot of situation to have a grasp on how these classes of morphisms interact with a different bifunctor
$$ {-} \otimes {-} : \dSets \times \dSets \rightarrow \dSets.$$

This is defined on representables via the formula $T \otimes S = N( \Omega(T) \otimes_{\mathsf{BV}} \Omega(S) )$, where $\otimes_{\mathsf{BV}}$ denotes the Boardman--Vogt tensor product of coloured operads, and then extend to all presheaves by taking left Kan extensions on both variables. The original definition of the Boardman--Vogt tensor product is contained in \cite{BoardmanVogtHoInv} within the more general context of algebraic theories and a similar treatment is contained in Section 1.6 of \cite{HeutsMoerdijkDendroidal} for the specific case of coloured operads.

Let us make two important remarks concerning the dendroidal tensor product. Firstly, $\dSets$ equipped with $\otimes$ does not define a monoidal structure, since the tensor product will not be associative, even at the level of the representable presheaves. Secondly, in general there are no projection maps associated to the dendroidal tensor product; however, in the special case when $X \in \sSets$ and $Y \in \dSets$, there exists a projection $\pi_Y \colon X \otimes Y \to Y$ onto the dendroidal component, see Section 4.2 of \cite{HeutsMoerdijkDendroidal} for details.

The dendroidal tensor product takes part in an adjunction, which, for any $X, Y, A \in \dSets$, takes the form 
$$ \dSets(X \otimes A, Y) \cong \dSets(X, \mathsf{Hom}(A,Y)).$$ 

We will reserve the notation $Y^A \in \sSets$ for the underlying simplicial set  of the dendroidal set $\mathsf{Hom}(A,Y)$. As one would expect and immediately checks, the 0-simplices of $Y^A$ are the dendroidal maps $ A \to Y$.

For the next few statements, we will represent the leaf edge of $C_1$ by $0$, and the root edge by $1$. We also recall that, for a dendroidal map $p \colon X \to Y$ and colour $y \in Y_\eta$, we can define the \textit{fiber of $y$} via the pullback
\[\begin{tikzcd}
	{X_y} & X \\
	\eta & Y.
	\arrow[from=1-1, to=1-2]
	\arrow[from=1-1, to=2-1]
	\arrow["\lrcorner"{anchor=center, pos=0, scale=1.5}, draw=none, from=1-1, to=2-2]
	\arrow["p", from=1-2, to=2-2]
	\arrow["y", from=2-1, to=2-2]
\end{tikzcd}\]
It follows from this that $X_y$ is actually a simplicial set due to the isomorphism $\dSets_{/\eta} \cong \sSets$: Moreover, it is an $\infty$-category if $p$ is a dendroidal inner fibration.

\begin{lemma}
    Let $i \colon A \to B$ be a dendroidal inner/leaf/root anodyne and $p \colon X \to Y$ a dendroidal inner/left/right fibration. Then the canonical map to the pullback
    \begin{equation*}
        \Psi : X^B \longrightarrow X^A \times_{Y^A} Y^B
    \end{equation*}
    is a trivial Kan fibration. 
    
    Moreover, suppose there are dendroidal maps $f,g\colon B \to X$ projecting to the same pair $(\alpha, \beta)$ via the map $\Psi$. Then there exists a dendroidal morphism $H\colon C_1 \otimes B \to X$ with the following properties:
        \begin{enumerate}[label=(\alph*)]
            \item $H(0, -) = g$ and $H(1,-) = f$.
    \item The following diagrams
    $$
\begin{tikzcd}
C_1 \otimes B \arrow[rr, "H"] \arrow[d, "\pi_B"'] &  & X \arrow[d, "p"] & C_1 \otimes A \arrow[rr, "\mathrm{id} \otimes i"] \arrow[d, "\pi_A"] &  & C_1 \otimes B \arrow[d, "H"] \\
B \arrow[rr, "\beta"]                             &  & Y                & A \arrow[rr, "\alpha"]                                               &  & X                           
\end{tikzcd}
    $$
    commute.

    \item For every colour $b \in B_{\eta}$, the 1-corolla $H(-,b) \colon C_1 \to X$ defines an equivalence in the fiber $X_{g(b)}$ of $p$.
        \end{enumerate}
        
    \label{J_homotopy}
\end{lemma}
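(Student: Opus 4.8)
The plan is to derive the first assertion from the pushout-product (box-product) compatibility of anodynes with the dendroidal tensor $\otimes$, and then build the homotopy $H$ by a single lifting against $\Psi$. For the first part, I would argue that $\Psi$ is the map induced on mapping objects by the adjunction between $\otimes$ and $\mathsf{Hom}$: by adjointness, a lifting problem of $\partial\Delta^n \to \Delta^n$ against $\Psi\colon X^B \to X^A\times_{Y^A}Y^B$ transposes to a lifting problem of the pushout-product
\[
(\partial\Delta^n \to \Delta^n)\,\square\,(i\colon A\to B)
\]
against $p\colon X\to Y$. One then invokes the known fact (the dendroidal analogue of the box-product theorem, \citep[Sec. 9]{HeutsMoerdijkDendroidal}, or Prop. 9.2 there) that the pushout-product of a monomorphism of simplicial sets with a dendroidal inner/leaf/root anodyne is again inner/leaf/root anodyne — hence has the left lifting property against the corresponding fibration $p$. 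Since $\partial\Delta^n\to\Delta^n$ ranges over a generating set for the monomorphisms of $\sSets$, this shows $\Psi$ is a trivial Kan fibration.

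For the construction of $H$ with properties (a) and (b), I would package the data of $f$ and $g$ into a single map out of a pushout. Concretely, $f,g\colon B\to X$ projecting to the same $(\alpha,\beta)$ under $\Psi$ means precisely that $p f = p g = \beta$ and $f\circ i = g\circ i = \alpha$. Consider the monomorphism $(\partial\Delta^1\to\Delta^1)\,\square\,(i\colon A\to B)$, i.e.\ the map
\[
\bigl(\Delta^1\otimes A\bigr)\cup_{\partial\Delta^1\otimes A}\bigl(\partial\Delta^1\otimes B\bigr)\longrightarrow \Delta^1\otimes B,
\]
and write $C_1$ for $\Delta^1$ viewed dendroidally with leaf $0$, root $1$. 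The pair $(f,g)$ together with $\alpha\circ\pi_A$ on $C_1\otimes A$ assemble into a map from the domain of this pushout-product to $X$; similarly $\beta\circ\pi_B$ gives a compatible map $\Delta^1\otimes B\to Y$. Because $\Psi$ is a trivial Kan fibration — equivalently, because the pushout-product map above is inner/leaf/root anodyne and $p$ is the corresponding fibration — the resulting square admits a diagonal lift $H\colon C_1\otimes B\to X$. By construction $H(1,-)=f$, $H(0,-)=g$, $p\circ H = \beta\circ\pi_B$, and $H\circ(\mathrm{id}\otimes i)=\alpha\circ\pi_A$ (using that $\pi_B\circ(\mathrm{id}\otimes i)=i\circ\pi_A$), which is exactly (a) and (b).

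The main obstacle is property (c): showing that for each colour $b\in B_\eta$ the restricted $1$-corolla $H(-,b)\colon C_1\to X$ is an equivalence in the fiber $X_{g(b)}$. The honest subtlety is that $H$ as produced by an arbitrary lift need not have this property, so one must choose the lift more carefully. Here I would use that the two maps $f,g$ already agree after $\Psi$, so informally $H(-,b)$ is "constant" modulo the fibration; the strategy is to first build the homotopy over $A$ (where $f\circ i = g\circ i$, so the constant homotopy works and trivially restricts to degenerate, hence equivalence, $1$-corollas), and then extend along $i$ using that $\Psi$ is a trivial fibration — degeneracy of the $A$-part forces the extension's $1$-corollas at colours of $B$ to be equivalences, because equivalences in an $\infty$-category form a subcomplex closed under the relevant lifting and the fibers $X_y$ are $\infty$-categories by the remark preceding the lemma. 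Making this precise will require combining the trivial-fibration property of $\Psi$ with Theorem \ref{joyal_lifting} (to propagate the equivalence condition through the tree $C_1\otimes B$ fiberwise), and this bookkeeping — tracking which $1$-corollas of $C_1\otimes B$ map to equivalences and checking the fiberwise statement colour by colour — is where the real work lies.
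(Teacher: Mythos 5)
Your argument for the first assertion and for properties (a) and (b) is sound and matches the substance of the paper (the first part is exactly \citep[Thm.~6.33]{HeutsMoerdijkDendroidal}, and the lift giving (a) and (b) is the transposed form of lifting $\partial\Delta^1\to\Delta^1$ against the trivial Kan fibration $\Psi$). The genuine gap is property (c), and you correctly diagnose it but do not close it. Your proposed repair --- first take the constant homotopy over $A$, then extend along $i$ using that $\Psi$ is a trivial fibration, hoping that ``degeneracy of the $A$-part forces'' the $1$-corollas over colours of $B$ to be equivalences --- does not work as stated: lifting $\partial\Delta^1\to\Delta^1$ against $\Psi$ only prescribes the two endpoints $f$ and $g$ of the resulting edge of $X^B$ and says nothing about which edge you get, so there is no mechanism forcing $H(-,b)$ to be invertible. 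Degeneracy over $A$ constrains only the $A$-part, and Theorem~\ref{joyal_lifting} is a lifting statement that consumes an equivalence as a hypothesis rather than producing one.

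The missing idea is to replace $\Delta^1$ by the interval $J$ (the nerve of the groupoid with two objects and a single isomorphism between them). Since $\Psi$ is a trivial Kan fibration, it lifts against the monomorphism $\partial J\to J$; transposing, one obtains $\widetilde H\colon J\otimes B\to X$ with $\widetilde H|_{\partial J\otimes B}=(g,f)$, $p\widetilde H=\beta\pi_B$ and $\widetilde H|_{J\otimes A}=\alpha\pi_A$. Setting $H$ to be the restriction of $\widetilde H$ along the generating $1$-simplex $C_1\subseteq J$ gives (a) and (b), and (c) is now automatic: for each colour $b$, restricting $\widetilde H$ to $J\otimes\{b\}$ yields a map $J\to X_{g(b)}$ into an $\infty$-category, and any such map sends the generating edge of $J$ to an equivalence. (An alternative completion of your own route: the fiber $\Psi^{-1}(\alpha,\beta)$ is a contractible Kan complex, every edge of a Kan complex is an equivalence, and evaluation at $b$ is a map of $\infty$-categories $\Psi^{-1}(\alpha,\beta)\to X_{g(b)}$, hence preserves equivalences; but some such additional input is indispensable.)
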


\begin{proof}
    The proof of the first statement is contained \citep[Thm. 6.33]{HeutsMoerdijkDendroidal}. As for the second one, we can more succinctly describe the map $H$ as a diagonal lift in the diagram
\[\begin{tikzcd}
\partial J \otimes B \cup J \otimes A \arrow[d] \arrow[r] & X \arrow[d, "p"] \\
J \otimes B \arrow[r] \arrow[ru, dashed]                  & Y               
\end{tikzcd}\]
    where the top map is $\left( g \amalg f, \alpha \pi_A\right)$ and the bottom one is $\beta \pi_B$, and $J$ is nerve of the groupoid with two objects and an isomorphism between them. By adjunction, this is equivalent to finding a lift in 
    $$
\begin{tikzcd}
\partial J \arrow[d] \arrow[r] & X^B \arrow[d, "\Psi"]   \\
J \arrow[r] \arrow[ru, dashed] & X^A \times_{Y^A} Y^B.
\end{tikzcd}
    $$

    By the first part and the fact that $\partial J \to J$ is an anodyne map, we conclude that the desired lift exists.
    \end{proof}

\begin{remark}
    The map $H$ is sometimes known as a \textit{fibrewise $J$-homotopy} relative to $A$ between $f$ and $g$. For a more thorough study of this notion, see Section 6.8 of \cite{HeutsMoerdijkDendroidal}.
\end{remark}

\section{The right cancellation property for classes of dendroidal morphisms}

In order to prove Theorem \ref{right_cancel_dendroidal} and following Stevenson's own proof, we start by showing the following auxiliary lemma. 

\begin{lemma}
    Let $p\colon X \to Y$ be an inner fibration of dendroidal sets and $j\colon B \to C$ a normal monomorphism. Consider any commutative diagram
    \begin{equation}
\begin{tikzcd}
0 \otimes C \cup_{0 \otimes B} C_1 \otimes B \arrow[rr, "K"] \arrow[d] &                  & X \arrow[d, "p"] \\
C_1 \otimes C \arrow[r, "\pi_C"]                                       & C \arrow[r, "g"] & Y               
\end{tikzcd}
\label{aux_lemma_lift}
    \end{equation}
such that, for every colour $b \in B_{\eta}$, the 1-corolla $K(-, b) \colon C_1 \to X$ is an equivalence in the fiber $X_{g(b)}$ of $p$. Then a diagonal lift always exists.
\label{technical_lift_joyal}
\end{lemma}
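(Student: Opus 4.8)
The plan is to follow Stevenson's simplicial strategy: first reduce the lifting problem to the case where $j$ is an elementary boundary inclusion $\partial T \to T$, and then resolve that case by a shuffle filtration of $C_1 \otimes T$. For the reduction, one uses that any normal monomorphism $j \colon B \to C$ is a transfinite composite $B = B_0 \hookrightarrow B_1 \hookrightarrow \cdots$ whose successive stages are pushouts of coproducts of elementary boundary inclusions, and builds the lift in \eqref{aux_lemma_lift} by transfinite induction along this filtration. The invariant to maintain at stage $\alpha$ is that there is a map $K_\alpha \colon 0 \otimes C \cup C_1 \otimes B_\alpha \to X$ lying over $g\pi_C$, extending $K$, and sending $C_1 \otimes \{b\}$ to an equivalence in $X_{g(b)}$ for \emph{every} colour $b$ of $B_\alpha$. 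At a successor stage one is attaching a cell $\partial T \to T$; since $-\otimes-$ preserves colimits and $0 \otimes C$ contains the image of $0 \otimes T$, extending $K_\alpha$ over the new cell is exactly an instance of the lemma for the inclusion $0 \otimes T \cup_{0 \otimes \partial T} C_1 \otimes \partial T \hookrightarrow C_1 \otimes T$. Its hypothesis holds: $\partial T$ contains all colours of $T$ when $T$ has a vertex, so those colours already belong to $B_\alpha$, where the relevant $1$-corollas are equivalences by the inductive invariant; and if $T = \eta$ — the case in which a new colour appears — one extends by a degeneracy, which is automatically an equivalence. Limit stages are handled by colimits, and at the top of the filtration the resulting map is defined on $0 \otimes C \cup C_1 \otimes C = C_1 \otimes C$, which is the desired lift.

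For the case $j = \partial T \to T$ I would filter $0 \otimes T \cup_{0 \otimes \partial T} C_1 \otimes \partial T \hookrightarrow C_1 \otimes T$ by the shuffles of $C_1 \otimes T$, ordered so that a shuffle in which the vertex of $C_1$ has percolated less far past the vertices of $T$ (hence which is ``closer'' to the copy $0 \otimes T$) is attached first; each shuffle is then attached through a secondary filtration of its nondegenerate sub-dendrices, arranged so that every elementary step is the pushout of a single horn inclusion $\Lambda^x S \to S$. One then verifies that each such horn is either inner — whence the partial lift extends because $p$ is an inner fibration — or else is a root, respectively leaf, horn at a \emph{unary} root, respectively leaf, vertex $v$ of $S$, and the corolla of $S$ at $v$ is a degeneracy of $C_1 \otimes \{e\}$ for an edge $e$ of $T$. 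In that case the corolla factors through $C_1 \otimes \partial T$, on which the map is $K$, hence is sent to $K(-,e)$, which is an equivalence in $X_{g(e)}$ by hypothesis; so the horn can be filled by the special outer-horn lifting property of inner fibrations, furnished by Theorem \ref{joyal_lifting} and its leaf-vertex analogue. Assembling all these extensions produces the lift.

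The delicate point, and the main obstacle, is the combinatorics of the secondary filtration. For a linear tree $T$ this recovers Stevenson's argument essentially verbatim, with each shuffle contributing exactly one new face; but for a general tree a single shuffle typically has several nondegenerate faces outside the previous stage, and certain ``interior'' cells — for instance the corolla $1 \otimes C_2$ inside $C_1 \otimes C_2$, which appears as a face of two distinct faces of the top shuffle — can only be inserted as the unique new face of a suitably chosen step, sometimes forcing the use of a leaf horn rather than an inner one. Exhibiting, for every tree $T$, an ordering of the sub-dendrices of each shuffle along which every step is a single horn of the asserted type (inner, or outer at a unary vertex with special corolla $C_1 \otimes \{e\}$) is the heart of the proof; the remainder is bookkeeping with the pushout–product and the cocontinuity of $\otimes$.
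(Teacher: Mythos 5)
Your overall strategy matches the paper's: reduce to boundary inclusions $\partial T \to T$ via the skeletal filtration of normal monomorphisms (your bookkeeping of the equivalence invariant across cell attachments, including the $T=\eta$ case where a genuinely new colour appears and one extends by a degeneracy, is correct and in fact more explicit than what the paper writes), then filter the pushout-product $0 \otimes T \cup_{0\otimes \partial T} C_1 \otimes \partial T \hookrightarrow C_1\otimes T$ and fill horns, using ordinary inner horn fillers together with the special outer-horn lifting at a unary vertex whose corolla is of the form $C_1 \otimes \{e\}$ and hence goes to an equivalence by hypothesis.

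However, there is a genuine gap exactly where you place the ``heart of the proof'': you do not actually produce the shuffle filtration with the asserted properties, and that filtration is the entire combinatorial content of the lemma. The paper does not redo this combinatorics either; it invokes Theorem B.2 of Cisinski--Moerdijk \cite{CisinskiMoerdijk}, which supplies a finite filtration $A_0 \subseteq \cdots \subseteq A_n = C_1\otimes T$ starting from the pushout-product, in which every step except the last is inner anodyne and the last is the pushout of a \emph{single} root horn $\Lambda^v S \to S$ at a unary root vertex $v$ whose corolla is $C_1 \otimes r_T$. This is sharper than what you conjecture: only one outer horn ever occurs, it is a root horn at the root edge of $T$, so only the root-vertex form of Theorem \ref{joyal_lifting} (the one actually stated in the paper) is needed, whereas your sketch would additionally require its leaf-vertex analogue and a case analysis over many special vertices. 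As written, your proposal reduces the lemma to an unproved combinatorial claim; you must either prove that such an ordering of sub-dendrices exists for every tree $T$ or cite the Cisinski--Moerdijk filtration. A further minor omission: before applying Theorem \ref{joyal_lifting} one should pull back $p$ along $g$ so that $X$ and $Y$ may be assumed to be $\infty$-operads, since that theorem is stated for inner fibrations between $\infty$-operads.
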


\begin{proof}
    Firstly, we can assume that $j$ is a boundary inclusion $\partial T \to T$, due to the existence of a skeletal filtration for normal monomorphisms \citep[Prop. 3.26]{HeutsMoerdijkDendroidal}. Moreover, by pulling back $p$ along $g$ and using that any tree $T$ defines an $\infty$-operad, we can further assume that $X$ and $Y$ are $\infty$-operads.

    According to Theorem B.2 in \cite{CisinskiMoerdijk}, there is a finite filtration of the dendroidal set $C_1 \otimes T$
    $$A_0 \subseteq A_1 \subseteq \cdots \subseteq A_{n-1} \subseteq A_n = C_1 \otimes T,$$
    starting at $A_0 = 0 \otimes T \cup_{0 \otimes \partial T} C_1 \otimes \partial T$ and satisfying the following properties:
    \begin{enumerate}[label=(\roman*)]
        \item The inclusion $A_j \to A_{j+1}$ is inner anodyne, as along as $j \neq n-1$.
        \item There exists a pushout square
\[\begin{tikzcd}[cramped]
	{\Lambda^v S} & {A_{n-1}} \\
	S & {A_n}
	\arrow[from=1-1, to=1-2]
	\arrow[from=1-1, to=2-1]
	\arrow[from=1-2, to=2-2]
	\arrow[from=2-1, to=2-2]
	\arrow["\lrcorner"{anchor=center, pos=0, rotate=180, scale=1.5}, draw=none, from=2-2, to=1-1]
\end{tikzcd}\]
        where $S$ is a tree with at least two vertices and a unary root vertex $v$. Moreover, the inclusion of the vertex $v\colon C_1 \to C_1 \otimes T$ coincides with the inclusion
        $ C_1 \otimes r_T \to C_1 \otimes T ,$
        where $r_T$ denotes the root edge of $T$.
    \end{enumerate}

    Condition (i), together with $p$ being an inner fibration, provides us with a diagonal lift for diagram \eqref{aux_lemma_lift} up to stage $A_{n-1}$ of the filtration. For the last step of the filtration, consider the diagram
\[\begin{tikzcd}[cramped]
	{\Lambda^v S} & {A_{n-1}} & X \\
	S & { C_1 \otimes T} & Y
	\arrow[from=1-1, to=1-2]
	\arrow[from=1-1, to=2-1]
	\arrow[from=1-2, to=1-3]
	\arrow[from=1-2, to=2-2]
	\arrow["p", from=1-3, to=2-3]
	\arrow[from=2-1, to=2-2]
	\arrow["\lrcorner"{anchor=center, pos=0, rotate=180, scale=1.5}, draw=none, from=2-2, to=1-1]
	\arrow["{g \pi_T}", from=2-2, to=2-3]
\end{tikzcd}\]

    It suffices to show that the outer square admits a diagonal lift, since the leftmost square is a pushout diagram. With this purpose in mind, observe that property (ii) and our initial hypothesis on $K$ imply that the top map in the diagram sends the unitary vertex $v$ to an equivalence in $X$. Thus, the conditions of Theorem \ref{joyal_lifting} are satisfied, which provides us with our desired diagonal lift.
\end{proof}

\begin{proof}[Proof of Theorem \ref{right_cancel_dendroidal}]
    We will first consider the case of inner anodynes and in the end discuss how the proof goes for leaf and root anodynes.
    
    Suppose we are given the diagram below
\begin{equation*}
\begin{tikzcd}
A \arrow[d, "i"'] \arrow[rd, "fi"] &                  \\
B \arrow[d, "j"'] \arrow[r, "f"]   & X \arrow[d, "p"] \\
C \arrow[r, "g"]                   & Y            
\end{tikzcd},
\end{equation*}
    with $p$ being an inner fibration. In order to find a diagonal lift in the bottom square, note first that $ji$ being inner anodyne provides us with a morphism $s\colon C \to X$ making the outer square commute; however, this will not make the bottom square commute in general since the equation $sj = f$ may not hold.
    
    Since $i$ is inner anodyne and $p$ is an inner fibration, the canonical map
    \begin{equation*}
    \Psi : X^B \longrightarrow X^A \times_{Y^A} Y^B,
    \end{equation*}
    is a trivial Kan fibration by Lemma \ref{J_homotopy}. Moreover, by the construction of $s$, both of the morphisms $f, sj\colon B \to X$ lie on the fiber of $\Psi$ over the pair $(fi, gj)$, and therefore the second part of Lemma \ref{J_homotopy} can be applied. Spelling out the details, this entails the existence of a dendroidal morphism $H\colon C_1 \otimes B \to X$ such that the following hold:
   \begin{enumerate}[label=(\alph*)]
    \item $H(0, -) = sj$ and $H(1, -) = f$.
    \item The diagrams
    $$
\begin{tikzcd}
C_1 \otimes B \arrow[d, "\pi_B"'] \arrow[rr, "H"] &                  & X \arrow[d, "p"] & C_1 \otimes A \arrow[d, "\pi_A"'] \arrow[rr, "C_1 \otimes i"] &                  & C_1 \otimes B \arrow[d, "H"] \\
B \arrow[r, "j"]                                & C \arrow[r, "g"] & Y                & A \arrow[r, "i"]                                          & B \arrow[r, "f"] & X                         
\end{tikzcd}
    $$
    commute.

    \item For any colour $b \in B_{\eta}$, the 1-corolla $H(-,b)\colon C_1 \to X$ defines an equivalence in the fiber $X_{g(b)}$ of $p$.
   \end{enumerate}

   The homotopy $H\colon C_1 \otimes B \to X$ and the diagonal lift $s\colon C \cong 0\otimes C \to X$ give rise to a map $(s,H)\colon 0 \otimes C \cup_{0 \otimes B} C_1 \otimes B \to X$ on the respective pushout, which additionally makes the following diagram
\begin{equation}
\begin{tikzcd}
0 \otimes C \cup_{0 \otimes B} C_1 \otimes B \arrow[rr, "{(s,H)}"] \arrow[d] &                  & X \arrow[d, "p"] \\
C_1 \otimes C \arrow[r, "\pi_C"]                                             & C \arrow[r, "g"] & Y               
\end{tikzcd}
\label{diagram_extra}
\end{equation}
commute. By property (c) above, the restrictions $(s,H)(-,b)$ for each colour $b \in B_{\eta}$ define an equivalence in the fiber $X_{g(b)}$ of $p$. Therefore the conditions for Lemma \ref{technical_lift_joyal} are satisfied and a diagonal lift $t\colon C_1 \otimes C \to X$ can be constructed. It is now an easy verification that $t(1, -)$ provides the required lift for our initial diagram, as desired.

Finally, we discuss the leaf and root anodyne cases. Firstly, these classes of anodynes have their respective versions of Lemma \ref{J_homotopy}, therefore the only problem rests in Lemma \ref{technical_lift_joyal}. Of course, since left and right fibrations are particular cases of inner fibrations, this result still holds; but notice that we can show that \eqref{diagram_extra} admits a lift without this extra technical lemma. Indeed, due to the property that $(s,H)$ define certain equivalences in the fibers, we can actually replace the left map in \eqref{diagram_extra} by
$$ 0 \otimes C \cup_{0 \otimes B} J \otimes B \longrightarrow J \otimes C.$$
Since $0 \to J$ is both a left and right anodyne, an application of \citep[Cor. 6.30]{HeutsMoerdijkDendroidal} shows that this pushout-product is a leaf and root anodyne.
\end{proof}

\begin{remark}
One can deduce Stevenson's original simplicial version of Theorem \ref{right_cancel_dendroidal} from the dendroidal variation we just presented. Indeed, if $A \xrightarrow{i} B \xrightarrow{j} C$ is a string of simplicial monomorphisms with $ji$ and $i$ inner anodynes, then applying $\iota_!$ to it yields a string of dendroidal sets
$$ \iota_! A \xlongrightarrow{\iota_! i} \iota_! B \xlongrightarrow{\iota_! j} \iota_! C.$$
All the monomorphisms in questions are normal (being normal is meaningless in this context), and the dendroidal maps $\iota_! i$ and $\iota_!(ji)$ are inner anodynes. Therefore, by right cancellation so is $\iota_! j$, which is equivalent to saying $j$ is a simplicial inner anodyne. This argument also holds for the other two classes of anodynes we discussed.
\end{remark}

\section{An application: the symmetric monoidal envelope of a dendroidal $\mathbf{\infty}$-operad}

In this section we will give an application of the right cancellation property for dendroidal inner anodynes in order to explain how one can associate to any dendroidal set $X$ a simplicial set $\Proptensor(X)^{\otimes}$ which, when applied to the dendroidal nerve of a coloured operad $X = N\mathbf{P}$, should recover the symmetric monoidal envelope of the operad, which we recall below. Here we will write $\underline{n} = \{ 1, 2, \ldots, n\}$, which we often see also as the poset $\{1 < 2 < \cdots < n\}$.
\begin{definition}
    Let $\mathbf{P}$ be a coloured operad. We define the \textit{symmetric monoidal envelope of $\mathbf{P}$} is the symmetric monoidal category $(\Prop(\mathbf{P}), \boxplus, \emptyset)$ described by the following:
    \begin{itemize}[label=$\diamond$]
        \item The objects are the strings $(c_1, \ldots, c_n)$ of colours of $\mathbf{P}$, for $n \geq 0$. The empty string is respresented by $\emptyset$.
        \item A morphism
        $$ (c_1, \ldots, c_n) \longrightarrow (d_1, \ldots, d_m)$$
        is determined by a pair $(f, \{ p_j : j \in \underline{m} \})$, where $f\colon \underline{n} \to \underline{m}$ is a function of finite sets. For each $j \in \underline{m}$, $p_j$ is an operation in $\mathbf{P}$ with output $d_j$, and input set $\{ c_i : f(i) = j \} \subseteq \{ c_1, \ldots, c_n\}$ ordered according to $\underline{n}$. Composition comes from the operadic composition of $\mathbf{P}$.
        \item The tensor product is given on objects via concatenation
        $$ (c_1, \ldots, c_n) \boxplus (d_1, \ldots, d_m) = (c_1, \ldots, c_n, d_1, \dots, d_m),$$
        with unit given by the empty tuple.
    \end{itemize}
    \label{Prop_Strict}
\end{definition}

\begin{remark}
    The symmetric monoidal category $\mathsf{Env}(\mathbf{P})$ has appeared under many different names throughout the years. They initially appeared under the name of PROP in \cite{MacLaneNatural}, connected to the study of cohomology operations by Adams and Mac Lane. In this work we are working with the notion of a \textit{coloured} PROP, which is the one also appearing in the work of Boardman and Vogt in \cite{BoardmanVogtHoInv}, but not the one used by Hackney and Robertson in \cite{HackneyRobertsonProps}. 
    
    For a great survey on the history of PROPs see \cite{MarklProps}, and for more on coloured operads \cite{HackneyRobertsonProps} is a good resource.
\end{remark}
\subsection{Interpreting $\Fin_\ast$ as a category of forests}

\leavevmode

We will begin by defining a certain category of forests $\mathbb{F}$ which will be useful in keeping track of the combinatorics of $\Prop(\mathbf{P})$. By a \textit{forest} we mean a finite (possibly empty) collection of trees $\{T_i \in \Tree : i = 1, \dots, n \}$, which we will usually denote by $\bigoplus_{i=1}^n T_i$, and in the special case where all the $T_i$'s are the same tree $T$, we abbreviate the notation to $n \cdot T$. We note that we can view any forest $\bigoplus_{i=1}^n T_i$ as the coproduct of representables $\coprod_{i=1}^n T_i$ in $\dSets$, which we be useful at times. We also allow for the possibility of an empty forest, which we write $\emptyset$.

The category $\mathbb{F}$ is actually isomorphic to the category of finite pointed sets $\Fin_\ast$, but it is useful to have this formulation of the latter category in terms of forests for the subsequent results. This is done as follows:

\begin{itemize}[label=$\diamond$]
    \item A pointed set $\langle n \rangle = \{ 1, \ldots, n \} \cup \{ \ast \}$ will be represented by the forest $n \cdot \eta$, which is $\emptyset$ whenever $n = 0$.
    \item A function of pointed sets $\alpha \colon \langle m \rangle \to \langle n \rangle$ is given by a forest  
    $$F = \left( \bigoplus_{j=1}^n C_{\lvert \alpha^{-1}(j) \rvert} \right) \oplus \lvert \alpha^{-1}(\ast) \rvert \cdot \eta,$$
    together with an injective function $\mathsf{input}\colon \underline{m} \to E(F)$ with image the edges of the corollas and the copies of $\eta$, and satisfying the relation that the set $\mathsf{input}(\alpha^{-1}(j))$ is the leaf set of $C_{\lvert \alpha^{-1}(j) \rvert}$, for each $1 \leq j \leq n$.
    
  We will graphically represent the information of $\alpha$ in terms of a forest of the form
    \vspace{1em}

    \begin{center}
    \begin{tikzpicture}
    \draw (0,0.7)--(0,0);
    \draw (0,0) node {$\times$};
    \draw (1,0.7)--(1,0);
    \draw (1,0) node {$\times$};
    \draw (1.9,0) node {$\cdot$};
    \draw (2,0) node {$\cdot$};
    \draw (2.1,0) node {$\cdot$};
    \draw (3,0.7)--(3,0);
    \draw (3,0) node {$\times$};
    \draw (4.5,0)--(4.5,-0.7);
    \draw (-{0.7*sin(45)+4.5},0.7) -- (4.5,0) -- ({0.7*sin(45)+4.5},0.7);
     \draw (4.4,0.5) node {$\cdot$};
    \draw (4.5,0.5) node {$\cdot$};
    \draw (4.6,0.5) node {$\cdot$};
     \filldraw[black] (4.5,0) circle (1.5pt);
     \draw (5.4,0) node {$\cdot$};
    \draw (5.5,0) node {$\cdot$};
    \draw (5.6,0) node {$\cdot$};
    \draw (6.5,0)--(6.5,-0.7);
    \draw (-{0.7*sin(45)+6.5},0.7) -- (6.5,0) -- ({0.7*sin(45)+6.5},0.7);
    \draw (6.4,0.5) node {$\cdot$};
    \draw (6.5,0.5) node {$\cdot$};
    \draw (6.6,0.5) node {$\cdot$};
    \filldraw[black] (6.5,0) circle (1.5pt);
    
    \end{tikzpicture}
    \end{center}
    where the edges with the symbol $\times$ correspond to the copies of $\eta$, and the function $\mathsf{input}$ labels the leaves of the corollas. The target map is given by keeping all the edges at the level of the roots, and the source by keeping all the edges at the level of the leaves, which includes the edges above the symbol $\times$.

    \item Given composable morphisms 
    $$\ell \cdot \eta \xlongrightarrow{(F_1, \mathsf{input}_1)} m \cdot \eta \xlongrightarrow{(F_2, \mathsf{input}_2)} n \cdot \eta$$
    the composition is given by the pair $(G, \mathsf{input}_1)$. Here $G$ is obtained by first grafting $F_1$ along the leaves of $F_2$  (we attach the $i^{th}$ copy of $\eta$ in $m \cdot \eta$ to the leaf of $F_2$ with the label $\mathsf{input}_2(i)$) and then contracting the edges that are identified.
\end{itemize}

More generally, a $k$-simplex  in $\Fin_\ast$
$$\langle n_0 \rangle \xrightarrow{\alpha_1} \langle n_1 \rangle \xrightarrow{\alpha_2} \cdots \xrightarrow{\alpha_{k-1}} \langle n_{k-1} \rangle \xrightarrow{\alpha_k} \langle n_k \rangle,$$
will be given by a string
$$n_0 \cdot  \eta \xrightarrow{(F_1, \mathsf{input}_1)} n_1 \cdot \eta \longrightarrow \cdots \longrightarrow n_{k-1} \cdot \eta \xrightarrow{(F_k, \mathsf{input}_k)} n_k \cdot \eta,$$
which is encoded by the triple $(F, \mathsf{input}_1)$, where $F$ is the forest obtained via grafting the $F_i$'s in a similar fashion to what was done when describing the composition, and $\mathsf{input}_1$ labels the leaves of $F$. This reformulation defines a category isomorphic to $\Fin_\ast$, which we will denote by $\mathbb{F}$.

\begin{example} Consider the 3-simplex in $\mathbb{F}$
$$5 \cdot \eta \xrightarrow{F_1} 4 \cdot \eta \xrightarrow{F_2} 3 \cdot \eta \xrightarrow{F_3} \eta,$$
pictorially represented by the forests

 \vspace{1em}

    \begin{center}
    \begin{tikzpicture}
    \draw (0,0)--(0,-0.7);
     \filldraw[black] (0,0) circle (1.5pt);
     \draw (-{0.7*sin(45)},0.7) -- (0,0) -- ({0.7*sin(45)},0.7);
     \draw (2,0.7) -- (2,0) -- (2,-0.7);
     \filldraw[black] (2,0) circle (1.5pt);
     \draw (-{0.7*sin(60)+2},0.7) -- (2,0) -- ({0.7*sin(60)+2},0.7);
      \draw (-{0.7*sin(45)+3.5},0) -- (-{0.7*sin(45)+3.5},-0.7);
     \filldraw[black] (-{0.7*sin(45)+3.5},0) circle (1.5pt);
     \draw ({0.7*sin(45)+3.5},0) -- ({0.7*sin(45)+3.5},-0.7);
     \filldraw[black] ({0.7*sin(45)+3.5},0) circle (1.5pt);
     \node [label={[xshift=-2cm, yshift=-0.5cm]$F_1\colon$}] {};

    \draw (0,-1) -- (0,-1.7);
     \draw (0,-1.7) node {$\times$};
     \draw (2,-1) -- (2,-1.7) -- (2,-2.4);
     \filldraw[black] (2,-1.7) circle (1.5pt);
      \draw (3.5,-1.7) -- (3.5,-2.4);
     \filldraw[black] (3.5,-1.7) circle (1.5pt);
      \draw (-{0.7*sin(45)+3.5},-1) -- (3.5,-1.7) -- ({0.7*sin(45)+3.5},-1);
     \draw (4.7,-1.7) -- (4.7,-2.5);
     \filldraw[black] (4.7,-1.7) circle (1.5pt);
     \node [label={[xshift=-2cm, yshift=-2.1cm]$F_2\colon$}] {};

     \draw (2,-2.7) -- (2,-3.4);
     \draw (2,-3.4) node {$\times$};
     \draw (3.5,-2.7) -- (3.5,-3.4);
     \draw (3.5,-3.4) node {$\times$};
     \draw (4.7,-2.7) -- (4.7,-3.4) -- (4.7,-4.1);
     \filldraw[black] (4.7,-3.4) circle (1.5pt);
     \node [label={[xshift=-2cm, yshift=-3.7cm]$F_3\colon$}] {};
\end{tikzpicture}
\end{center}

For the sake of simplicity, we have omitted the labelling for the inputs and outputs, but it should be understood that they are labelled increasingly from left to right. The forest $F$ associated to such a 3-simplex is the forest $F$ below

\vspace{1em}

\begin{center}
    \begin{tikzpicture}
    \draw (0,1.4) -- (0,0.7);
    \draw (-{0.7*sin(45)},2.1) -- (0,1.4) -- ({0.7*sin(45)},2.1);
    \draw (0,0.7) node {$\times$};
     \filldraw[black] (0,1.4) circle (1.5pt);
     \draw (2,2.1) -- (2,1.4) -- (2,0.7);
    \draw (-{0.7*sin(60)+2},2.1) -- (2,1.4) -- ({0.7*sin(60)+2},2.1);
     \filldraw[black] (2,0.7) circle (1.5pt);
     \filldraw[black] (2,1.4) circle (1.5pt);
     \draw (2,0.7) -- (2,0);
     \draw (2,0) node {$\times$};
    \draw (-{0.7*sin(45)+4},1.4) -- (4,0.7) -- ({0.7*sin(45)+4},1.4);
     \filldraw[black] (4,0.7) circle (1.5pt);
     \filldraw[black] (-{0.7*sin(45)+4},1.4) circle (1.5pt);
     \filldraw[black] ({0.7*sin(45)+4},1.4) circle (1.5pt);
     \draw (4,0.7)--(4,0);
     \draw (4,0) node {$\times$};
     \draw (6,0.7) -- (6,0)--(6,-0.7);
     \filldraw[black] (6,0) circle (1.5pt);
     \filldraw[black] (6,0.7) circle (1.5pt);
     \node [label={[xshift=-2cm, yshift=0.3cm]$F\colon$}] {};
\end{tikzpicture}
\end{center}

One can also describe the action of the simplicial face maps $\partial_i \colon N\mathbb{F}_3 \to N\mathbb{F}_2$ on the 3-simplex in question in terms of the forest $F$ (here we will again ignore the action of the face maps on the labelling data):
\begin{itemize}[label=$\diamond$]
    \item $\partial_0 F$ is obtained by removing the $\times$ symbol from trees 2 and 3, and applying the root face to tree 4.
    \item $\partial_1 F$ is obtained by removing the $\times$ symbol from tree 1, applying the root face to trees 2 and 3, and contracting $w$ in tree 4. When the root face is applied to trees 2 and 3, the symbol $\times$ propagates to the newly-obtained trees.
    \item $\partial_2 F$ is obtained by applying the root face to tree 1, contracting $x, y$ and $z$ in trees 2 and 3, and removing the stump in tree 4.
    \item $\partial_3 F$ is obtained by applying the leaf faces in trees 1, 2 and 3.
\end{itemize}

It is not difficult to deduce from this the general pattern of the face maps on a generic element of $N\mathbb{F}_k$, by removing and contracting certain "layers" of the associated forest. 
\end{example}

From now on we will ignore the information on a $k$-simplex of $N\mathbb{F}$ given by the labelling, and instead think of such a simplex as a certain forest $F$. In particular, this means that we can see each simplex in $N \mathbb{F}$ as a dendroidal set, as we explained at the start of the section. Before delving into the technical results we will need, we will present some useful definitions.
\begin{definition}
    Let $k \geq 1$ and $F$ a $k$-simplex in $N\mathbb{F}$.
    \begin{enumerate}[label=(\alph*)]
        \item We say a component tree $T$ of $F$ is \textit{uprooted} if its graphical representation corresponds to a tree containing the symbol $\times$.
        \item We write $\drl F \subseteq F$ for the dendroidal subset given by the union of the faces $\partial_0 F \cup \partial_k F \subseteq F$.
        \item For $0 \leq j \leq n$, we write $\Lambda^j F \subseteq F$ for the dendroidal set given by the union of all faces $\partial_i F$ such that $i \neq j$.  
    \end{enumerate}
    \label{definition_horns}
\end{definition}

The next two lemmas will be the main technical results that will be used in the next section. We point out that here is where we will make heavy use of the right cancellation property for inner anodynes, which lets us avoid complicated combinatorial arguments about the dendroidal category.

\begin{lemma}
    Let $k \geq 2$ and $F \in N\mathbb{F}_k$. Then the dendroidal inclusion $\drl F \to F$ is inner anodyne.
    \label{rl_inner}
\end{lemma}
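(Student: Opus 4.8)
The proof I have in mind rests on two ingredients: that spine inclusions of trees --- hence of forests --- are inner anodyne (standard; see e.g.\ \cite{CisinskiMoerdijk, HeutsMoerdijkDendroidal}), and the right cancellation property of Theorem~\ref{right_cancel_dendroidal}. The structural point is that, reading off the explicit description of the face maps on a simplex of $N\mathbb{F}$ (as in the worked example above), the two faces $\partial_0 F$ and $\partial_k F$ are \emph{outer}: each is obtained from $F$ by deleting one of the two extreme ``layers'', an operation built from elementary leaf faces, root faces and manipulations of the uprooted $\eta$-components --- in particular \emph{no inner edge of a component tree is ever contracted}. Only the intermediate faces $\partial_1 F, \dots, \partial_{k-1} F$ contract inner edges, and these are exactly the ones we shall never need to fill. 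Writing $\mathrm{Sp}(F)$ for the spine of $F$, one reads off from the same description --- here using $k \ge 2$ --- that every corolla of $F$ survives in at least one of $\partial_0 F, \partial_k F$, so $\mathrm{Sp}(F) \subseteq \drl F$; that $\mathrm{Sp}(F) \cap \partial_0 F = \mathrm{Sp}(\partial_0 F)$ and $\mathrm{Sp}(F) \cap \partial_k F = \mathrm{Sp}(\partial_k F)$; and that the overlap $\partial_0 F \cap \partial_k F$ is $\partial_{k-1}(\partial_0 F)$, an outer face of the simplex $\partial_0 F$.

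Granting this, the argument proceeds in three steps. First, $\mathrm{Sp}(F) \hookrightarrow \mathrm{Sp}(F) \cup \partial_k F$ is the pushout of the spine inclusion $\mathrm{Sp}(\partial_k F) \hookrightarrow \partial_k F$ along $\mathrm{Sp}(\partial_k F) = \mathrm{Sp}(F) \cap \partial_k F \hookrightarrow \mathrm{Sp}(F)$, hence inner anodyne. Second, $\mathrm{Sp}(F) \cup \partial_k F \hookrightarrow \drl F$ is the pushout of $\big(\mathrm{Sp}(\partial_0 F) \cup \partial_{k-1}(\partial_0 F)\big) \hookrightarrow \partial_0 F$ along the inclusion of $\mathrm{Sp}(\partial_0 F) \cup \partial_{k-1}(\partial_0 F) = (\mathrm{Sp}(F) \cup \partial_k F) \cap \partial_0 F$ into $\mathrm{Sp}(F) \cup \partial_k F$, and the attaching map here is inner anodyne by a first application of right cancellation \emph{inside} $G := \partial_0 F$: both $\mathrm{Sp}(G) \hookrightarrow G$ and $\mathrm{Sp}(G) \hookrightarrow \mathrm{Sp}(G) \cup \partial_{k-1}(G)$ are inner anodyne --- the former a spine inclusion, the latter a pushout of the spine inclusion $\mathrm{Sp}(\partial_{k-1}G) \hookrightarrow \partial_{k-1}G$ (using again that $\partial_{k-1}G$ is an outer face, so that $\mathrm{Sp}(G) \cap \partial_{k-1}G = \mathrm{Sp}(\partial_{k-1}G)$) --- so Theorem~\ref{right_cancel_dendroidal} gives that $\mathrm{Sp}(G) \cup \partial_{k-1}(G) \hookrightarrow G$ is inner anodyne. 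Composing the first two steps, $\mathrm{Sp}(F) \hookrightarrow \drl F$ is inner anodyne; since $\mathrm{Sp}(F) \hookrightarrow F$ is inner anodyne too, the third step is a final application of right cancellation to $\mathrm{Sp}(F) \hookrightarrow \drl F \hookrightarrow F$, which yields that $\drl F \hookrightarrow F$ is inner anodyne. (Every map here is a monomorphism between normal dendroidal sets, hence a normal monomorphism, so Theorem~\ref{right_cancel_dendroidal} applies.)

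The only step needing genuine care --- and the sole genuinely combinatorial input --- is verifying the structural claims of the first paragraph from the explicit recipe for the faces of $N\mathbb{F}$: that $\partial_0 F$ and $\partial_k F$ are contraction-free layer deletions, that their intersections with $\mathrm{Sp}(F)$ coincide with the spines of these faces, and that $\partial_0 F \cap \partial_k F$ is an outer face of $\partial_0 F$. Everything else is formal, and the role of right cancellation is precisely to let us bypass any analysis of how inner horns of the composite trees making up $F$ sit inside $F$ --- which is the ``complicated combinatorial argument'' we wish to avoid. If desired, one could first reduce to the case of $F$ a single component tree, since $\mathrm{Sp}(-)$, the faces, $\drl(-)$ and the inner anodynes are all compatible with the coproduct decomposition of a forest; but this reduction plays no essential role above.
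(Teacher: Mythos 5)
Your proof is correct, and it reaches the conclusion by the same overall strategy as the paper --- sandwiching $\drl F$ as $\mathrm{Sp}(F) \subseteq \drl F \subseteq F$, noting the composite is the (inner anodyne) spine inclusion, and applying Theorem~\ref{right_cancel_dendroidal} --- but the middle step is handled genuinely differently. The paper proves that $\mathrm{Sp}(F) \to \drl F$ is inner anodyne by induction on $k$, iterating the operator $\drl$ componentwise to produce a chain $\drl^{k-1}F \to \drl F \to F$ whose first map is a union of lower-dimensional instances of the same lemma; it leaves implicit the bookkeeping needed to see that this union of inner anodynes is itself inner anodyne. You instead build $\drl F$ from $\mathrm{Sp}(F)$ by two explicit pushouts (attaching $\partial_k F$ along its spine, then $\partial_0 F$ along $\mathrm{Sp}(\partial_0 F) \cup \partial_{k-1}(\partial_0 F)$), with a nested application of right cancellation to handle the second attaching map; this trades the induction on $k$ for a finite amount of subobject-lattice verification (the intersection identities $\mathrm{Sp}(F) \cap \partial_\epsilon F = \mathrm{Sp}(\partial_\epsilon F)$ for $\epsilon \in \{0,k\}$ and $\partial_0 F \cap \partial_k F = \partial_{k-1}(\partial_0 F)$), all of which do hold because the two extreme faces delete a layer of vertices without contracting inner edges. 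Your version is arguably more self-contained on the gluing point the paper glosses over; the one place to be slightly careful is the uprooted components, where $\partial_0$ acts as the identity on the underlying dendroidal set (the paper disposes of these by an explicit reduction, and your identities degenerate correctly there), and the description of $\partial_0 F$ as built from ``root faces'' should really read ``as the union of the maximal subtrees above the root vertices,'' since a root vertex with several incoming inner edges admits no elementary root face --- but this does not affect any of the subobject computations you actually use.
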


\vspace{-2em}

\begin{proof}
    Let us start by making some simplifying assumptions on $F$:
    
    \begin{itemize}[label=$\diamond$]
        \item We can assume that $F$ is a tree $T$ since the action of the face maps $\partial_0$ and $\partial_k$ can be restricted to each component of $F$.
        \item We can further assume that $T$ is not uprooted: we would then have the equality $\partial_0 T = T$ as dendroidal sets and therefore $\drl T= T$.
        \item If $k=2$ then $\drl T \to T$ is the spine inclusion of $T$, which is always inner anodyne by \citep[Lem. 6.37]{HeutsMoerdijkDendroidal}. Therefore we may additionally assume $k \geq 3$.
    \end{itemize}

   Before discussing the general case, we will first exemplify what happens when $k=3$: writing $T$ as a grafting $C_n \circ (T_1, \ldots, T_n)$ for $n \geq 0$, then the dendroidal set $\partial_0 F$ is $T_1 \cup \cdots \cup T_k$, and therefore the inclusion $\partial_{\mathrm{rl}} T \to T$ will correspond to
    $$ j \colon T_1 \cup \cdots \cup T_n \cup \partial_3 T \longrightarrow T,$$
    where we note that the trees $T_1, \ldots, T_n$ and $\partial_3 T$ are elements of $N\mathbb{F}_2$. We can now apply $\partial_{\mathrm{rl}}$ to each of the components of the domain of $j$, leading to the string of composable maps
    $$ \partial_{\mathrm{rl}}^2 T := \partial_{\mathrm{rl}} T_1 \cup \cdots \cup \partial_{\mathrm{rl}} T_n \cup  \partial_{\mathrm{rl}} (\partial_3 T) \xlongrightarrow{i} T_1 \cup \cdots \cup T_n \cup \partial_3 T \xlongrightarrow{j} T.$$

    The dendroidal inclusion $ji\colon \partial_{\mathrm{rl}}^2 (T) \to T$ is inner anodyne, since it coincides with the spine inclusion of $T$. Moreover, $i$ is also inner anodyne by the induction hypothesis. We can now apply the right cancellation property for inner anodynes to conclude that $j$ is also inner anodyne, as we wanted to show.

    For the general case, we first note that we can extend the previous definition of $\partial_{\mathrm{rl}}^2 T$ to arbitrary higher powers $\partial_{\mathrm{rl}}^n T$ by inductively applying the operator $\partial_{\mathrm{rl}}$ to each component of $\partial_{\mathrm{rl}}^{n-1} T$. For a general $T \in N\mathbb{F}_k$, we consider now the dendroidal inclusions 
    $$ \partial_{\mathrm{rl}}^{k-1} T \longrightarrow \partial_{\mathrm{rl}} T \longrightarrow T.$$
    We see that the composite is again just the spine inclusion, and $ \partial_{\mathrm{rl}}^{k-1} T \rightarrow \partial_{\mathrm{rl}} T$ is inner anodyne by the induction hypothesis. Consequently, another application of Theorem \ref{right_cancel_dendroidal} finishes the proof.
\end{proof}

\begin{lemma}
    Let $k \geq 2$ and $F \in N\mathbb{F}_k$. Then, for each $0 < j < k$, the horn inclusion $\Lambda^j F \to F$ is dendroidal inner anodyne.
    \label{horn_inner}
\end{lemma}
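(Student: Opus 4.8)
The plan is to mimic the strategy of Lemma \ref{rl_inner}, using the right cancellation property together with the observation that the full boundary-type inclusion $\mathrm{Sp}(F)\to F$ (or rather $\partial F \to F$ restricted appropriately) is already known to be inner anodyne, and then peeling off the face $\partial_j F$ by a cancellation argument. First I would reduce, exactly as in Lemma \ref{rl_inner}, to the case where $F$ is a single tree $T$ (the face maps $\partial_i$ act componentwise) and where $T$ is not uprooted, so that $\partial_0 T = T$ and hence $\Lambda^j T$ already contains all of $\partial_0 T$; in fact once $T$ is not uprooted we have $\drl T \subseteq \Lambda^j T \subseteq T$ for every $0 < j < k$, because $\partial_0 T = T$ forces $\Lambda^j T = T$ as well when $k \geq 2$ — wait, this needs care: if $T$ is not uprooted then $\partial_0 T = T$, so every horn $\Lambda^j F$ with $0<j<k$ trivially equals $T$ and there is nothing to prove. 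So the real content is when $T$ \emph{is} uprooted, and I should not discard that case.

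So assume $T$ is uprooted. The key inclusions to compare are
$$ \drl T \longrightarrow \Lambda^j T \longrightarrow T. $$
By Lemma \ref{rl_inner} the composite $\drl T \to T$ is inner anodyne (this is where $k\geq 2$ is used, and the uprooted case is exactly what Lemma \ref{rl_inner} handles after its own reductions — note Lemma \ref{rl_inner}'s proof secretly does cover uprooted trees because $\drl$ is computed before the "not uprooted" simplification is invoked; I would double-check that the statement of Lemma \ref{rl_inner} as written applies to uprooted $T$, and if not, strengthen it). All three maps are normal monomorphisms since $N\mathbb{F}$ consists of (coproducts of) representables and face inclusions, hence normal. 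Therefore, provided I can show the first map $\drl T \to \Lambda^j T$ is inner anodyne, the right cancellation property (Theorem \ref{right_cancel_dendroidal}) immediately gives that $\Lambda^j T \to T$ is inner anodyne, finishing the proof.

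The main obstacle is thus establishing that $\drl T \to \Lambda^j T$ is inner anodyne, i.e. that adjoining all the "middle" faces $\partial_i T$ for $i \neq 0, j, k$ to $\partial_0 T \cup \partial_k T$ is an inner anodyne expansion. I would handle this by a nested induction: attach the faces $\partial_i T$ one at a time (say in increasing order of $i$ among $0 < i < k$, $i \neq j$), at each stage writing the newly added piece as a pushout along an inner anodyne map. Concretely, when adjoining $\partial_i T$ to the union $U$ of the faces attached so far, the relevant map is $\partial_i T \cap U \to \partial_i T$; one identifies $\partial_i T \cap U$ with a union of faces of $\partial_i T$, and since $\partial_i T$ is itself (a component of) an element of $N\mathbb{F}_{k-1}$, the inductive hypothesis on $k$ — in the form of Lemma \ref{rl_inner} or this very lemma at level $k-1$ — shows this is inner anodyne, whence so is the pushout. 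The bookkeeping of which faces of $\partial_i T$ appear in the intersection (the simplicial-identity combinatorics $\partial_i \partial_\ell = \partial_{\ell-1}\partial_i$ for $\ell > i$, transported through the forest picture) is the delicate part, but it is the same combinatorics underlying the standard proof that horn inclusions of simplicial sets are generated by lower-dimensional horns, now decorated by the grafting/contraction operations on forests; I would organize it so that the right cancellation property absorbs most of it, exactly as in Lemma \ref{rl_inner}, rather than verifying each intersection by hand.
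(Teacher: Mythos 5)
Your overall strategy is the same as the paper's: reduce to a single component tree, dispose of a degenerate case, filter the inclusion $\drl F \to \Lambda^j F$ by adjoining the remaining faces one at a time (each step being a pushout along a map that is inner anodyne by induction on $k$ and on the stage of the filtration), and then apply the right cancellation property of Theorem \ref{right_cancel_dendroidal} to $\drl F \to \Lambda^j F \to F$, with Lemma \ref{rl_inner} supplying inner anodyny of the composite. That is exactly how the paper argues, so the substance of the plan is fine; do record the base case $k=2$ explicitly, where $\Lambda^1 F = \drl F$ and the statement is literally Lemma \ref{rl_inner}.

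The one genuine error is that you have the uprooted dichotomy backwards. It is the \emph{uprooted} components $T$ (those carrying the symbol $\times$) for which $\partial_0 T = T$ as dendroidal sets: on such a component $\partial_0$ at most deletes the $\times$ decoration, so $\drl T = T$, hence $\Lambda^j T = T$ and the horn inclusion is an identity. The reduction is therefore to the case where $F$ has \emph{no} uprooted components, which is precisely where the filtration argument has content; your claim that the non-uprooted case is trivial and the uprooted case is ``the real content'' inverts this. The slip is self-correcting in the sense that the argument you go on to describe for your supposedly hard case is exactly the paper's argument for the non-uprooted case (and your worry about whether Lemma \ref{rl_inner} covers uprooted $T$ is moot, since there the map $\drl T \to T$ is an identity), but as written you would be proving the trivial case carefully while discarding the case that actually requires work.
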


\begin{proof}
    The case when $k=2$ is exactly Lemma \ref{rl_inner}, so we can assume that $k \geq 3$ from now on. Furthermore, by the type of arguments as at the start of the previous lemma, we can additionally assume that $F$ has no uprooted components.
    
    For $0 \leq \ell \leq k-1$, set $\partial_{\leq \ell} F \subseteq \Lambda^j F$ to be the dendroidal subset obtained from $\drl F$ by attaching the faces $\partial_i F$, with $ 0 \leq i \leq \ell$ and $i \neq j$, which leads to a filtration
$$ \drl F = \partial_{\leq 0} F \subseteq \partial_{\leq 1} F \subseteq \cdots \subseteq \partial_{\leq k-1} F = \Lambda^j F.$$
   
   It suffices to show that each stage of this filtration is inner anodyne: indeed, the previous lemma together with the right cancellation property for inner anodynes applied to $\drl F \to \Lambda^j F \to F$ will then finish the proof.
   
   The first stage of the filtration can be represented via the following pushout square in $\dSets$:
\[\begin{tikzcd}
	{\drl \partial_1 F} & {\drl F} \\
	{\partial_1 F} & {\partial_{\leq 1}F}
	\arrow[from=1-1, to=1-2]
	\arrow[from=1-1, to=2-1]
	\arrow[from=1-2, to=2-2]
	\arrow[from=2-1, to=2-2]
	\arrow["\lrcorner"{anchor=center, pos=0, rotate=180, scale=1.5}, draw=none, from=2-2, to=1-1]
\end{tikzcd}\]
    
    The left map is inner anodyne by Lemma \ref{rl_inner} and therefore so is $\drl F \to \partial_{\leq 1} F$. For the higher stages of the filtration we have instead the description below:
    \[\begin{tikzcd}
	{\partial_{\leq i-1} \left( \partial_i F \right)} & {\partial_{\leq i-1} F} \\
	{\partial_i F} & {\partial_{\leq i}F}
	\arrow[from=1-1, to=1-2]
	\arrow[from=1-1, to=2-1]
	\arrow[from=1-2, to=2-2]
	\arrow[from=2-1, to=2-2]
	\arrow["\lrcorner"{anchor=center, pos=0, rotate=180, scale=1.5}, draw=none, from=2-2, to=1-1]
\end{tikzcd}\]

    Since $\partial_i F \in N\mathbb{F}_{k-1}$, we conclude that $\partial_{\leq i-1} \left( \partial_i F \right) \to \partial_i F$ is inner anodyne, by the induction hypothesis on both the height $k$ of the forest and on the stage of the filtration $i$. By the closure of inner anodynes under pushouts, the same holds for $\partial_{\leq i-1} F  \to \partial_{ \leq i} F$, as we wanted to show.
\end{proof}

\subsection{The symmetric monoidal envelope of a dendroidal $\infty$-operad}

\leavevmode

The reason for introducing and discussing the category $\mathbb{F}$ in the previous section is that it will help us encode the symmetric monoidal structure of $\Prop(\mathbf{P})$. However, this will not be enough for our purposes, since we also need to keep track of the colours of our operad. Instead, we will consider a slightly more elaborate version of $\mathbb{F}$, which we introduce now.

\begin{definition}
    The category of \textit{decorated forests} $\Fdec$ is the category given by the following data:
    \begin{itemize}[label=$\diamond$]
        \item An object is given by an $n$-tuple $(M_1, \ldots, M_n)$ for $n \geq 0$, where each $M_i$ is the forest  $k_i \cdot \eta$ for some integer $k_i \geq 0$. Such an $n$-tuple partitions the edges of the forest $\bigoplus_{i=1}^n M_i$. 
        \item A morphism in $\Fdec$
        $$ (L_1, \ldots, L_{\ell}) \xlongrightarrow{(\beta,F)} (M_1, \ldots, M_m)$$
       is given by a function of finite pointed sets $\beta\colon \langle \ell \rangle \to \langle m \rangle$, together with a forest $F$ which is a morphism of $\mathbb{F}$. Additionally, these must interact with each other in the following way:
       \begin{enumerate}[label=(\alph*)]
           \item The root edges of $F$ are partitioned according to the $M_i$'s, and all the remaining edges of $F$ (these are the leaves of the corollas and the edges of the uprooted components) are partitioned according to the $L_j$'s.
           \item The set of leaves over the edges in $M_i$ is the union of the edges contained in the $L_j$'s satisfying $\beta(j)=i$.
       \end{enumerate}

        The identity morphism will correspond to seeting $\beta$ to be the corresponding identity function, and $F$ is a forest of only 1-corollas.
       
        \item The composition of the morphisms
        $$ (L_1, \ldots, L_\ell) \xlongrightarrow{(\beta_1,F_1)} (M_1, \ldots, M_m) \xlongrightarrow{(\beta_2, F_2)} (N_1, \ldots, N_n)$$
        is the pair $(\beta_2 \beta_1, F_2 F_1)$, where $F_2 F_1$ comes from the composition in $\mathbb{F}$, see Remark \ref{labellings} for a clarification on how the grafting is done.
    \end{itemize}
\end{definition}

\begin{remark}
    As we have done before, when defining the morphisms in $\Fdec$ we have omitted the information regarding the function $\mathsf{input}$ that labels the inputs of $F$. However, the set up of $\Fdec$ gives a canonical choice of such a labelling: indeed, the leaves of $F$ form the forest of edges $\bigoplus_{i=1}^\ell L_i$, which is ordered via the internal ordering of each $L_i$, together with the usual order of the indexing set $\{1, \ldots, \ell \}$. This is the labelling which is assumed when defining the composition in $\Fdec$.
    \label{labellings}
\end{remark}

\begin{example} One should think of the simplices of $\Fdec$ as being the same as the simplices of $\mathbb{F}$, except that now each "layer" of edges in the forest is partitioned in some fashion. As an example, we can consider the following example of a 3-simplex in $\Fdec$
$$(A_1, A_2, A_3) \xrightarrow{(\beta_1,F_1)} (B_1,B_2,B_3) \xrightarrow{(\beta_2,F_2)} (C_1, C_2) \xrightarrow{(\beta_3,F_3)} D_1,$$
where each $(\beta_i, F_i)$ can be graphically represented as shown below:

 \vspace{1em}

    \begin{center}
    \begin{tikzpicture}
    \draw (-1,0)--(-1,-0.7);
     \filldraw[black] (-1,0) circle (1.5pt);
     \draw (-{0.7*sin(45)-1},0.7) -- (-1,0) -- ({0.7*sin(45)-1},0.7);
     \draw (1,0.7) -- (1,0) -- (1,-0.7);
     \filldraw[black] (1,0) circle (1.5pt);
     \draw (-{0.7*sin(60)+1},0.7) -- (1,0) -- ({0.7*sin(60)+1},0.7);
      \draw (2.5,0) -- (2.5,-0.7);
     \filldraw[black] (2.5,0) circle (1.5pt);
     \draw (3.5,0) -- (3.5,-0.7);
     \filldraw[black] (3.5,0) circle (1.5pt);
     \draw[black] (-1.5,-0.4) -- (-0.5,-0.4);
     \draw[black] (0.5,-0.4) -- (1.5,-0.4);
     \draw[black] (0.5,0.4) -- (1.15,0.4);
     \draw[black] (1.25,0.4) -- (1.5,0.4);
     \draw[black] (2.25,-0.4) -- (3.75,-0.4);
     \draw[black] (-1.5,0.4) -- (-0.5,0.4);
     \node [label={[xshift=-3cm, yshift=-0.5cm]$F_1\colon$}] {};
     \node [label={[xshift=-1.8cm, yshift=0cm]$A_2$}] {};
     \node [label={[xshift=-1.8cm, yshift=-0.8cm]$B_1$}] {};
     \node [label={[xshift=0.2cm, yshift=0cm]$A_1$}] {};
     \node [label={[xshift=1.8cm, yshift=0cm]$A_3$}] {};
     \node [label={[xshift=0.2cm, yshift=-0.8cm]$B_2$}] {};
     \node [label={[xshift=2cm, yshift=-0.8cm]$B_3$}] {};

     \draw (-1,-1.7) -- (-1,-1);
     \draw (-1,-1.7) node {$\times$};
     \draw (1,-1) -- (1,-1.7) -- (1,-2.4);
     \filldraw[black] (1,-1.7) circle (1.5pt);
      \draw (3,-1.7) -- (3,-2.4);
     \filldraw[black] (3,-1.7) circle (1.5pt);
      \draw (-{0.7*sin(45)+3},-1) -- (3,-1.7) -- ({0.7*sin(45)+3},-1);
     \draw (5,-1.7) -- (5,-2.4);
     \filldraw[black] (5,-1.7) circle (1.5pt);
     \draw[black] (0.5,-2.1) -- (3.5,-2.1);
     \draw[black] (0.5,-1.3) -- (1.5,-1.3);
     \draw[black] (4.5,-2.1) -- (5.5,-2.1);
     \draw[black] (2.5,-1.3) -- (3.5,-1.3);
     \draw[black] (-1.5,-1.3) -- (-0.5,-1.3);
     \node [label={[xshift=-3cm, yshift=-2cm]$F_2\colon$}] {};
     \node [label={[xshift=-1.8cm, yshift=-1.7cm]$B_1$}] {};
     \node [label={[xshift=0.2cm, yshift=-1.7cm]$B_2$}] {};
     \node [label={[xshift=0.2cm, yshift=-2.5cm]$C_1$}] {};
     \node [label={[xshift=4.2cm, yshift=-2.5cm]$C_2$}] {};
     \node [label={[xshift=2.2cm, yshift=-1.7cm]$B_3$}] {};

      \draw (1,-2.7) -- (1,-3.4);
     \draw (1,-3.4) node {$\times$};
     \draw (3,-2.7)--(3,-3.4);
     \draw (3,-3.4) node {$\times$};
     \filldraw[black] (5,-3.4) circle (1.5pt);
     \draw (5,-2.7)--(5,-3.4) --(5,-4.1);
     \draw[black] (4.5,-3.8) -- (5.5,-3.8);
     \draw[black] (4.5,-3) -- (5.5,-3);
     \draw[black] (0.5,-3) -- (3.5,-3);
     \node [label={[xshift=-3cm, yshift=-3.5cm]$F_3\colon$}] {};
     \node [label={[xshift=0.2cm, yshift=-3.4cm]$C_1$}] {};
     \node [label={[xshift=4.2cm, yshift=-3.4cm]$C_2$}] {};
     \node [label={[xshift=4.2cm, yshift=-4.2cm]$D_1$}] {};
\end{tikzpicture}

\end{center}

\vspace{1em}

The horizontal lines correspond to the partitions of the leaves and edges of the respective forests, and the omitted information regarding the $\beta_i$ can be easily read off from the picture: for instance, $\beta_2 \colon \langle 3 \rangle \to \langle 2 \rangle$ is given by $\beta_2(1) = \ast$ and $\beta_2(2)=\beta_2(3)=1$. The forest $F$ associated to such a 3-simplex is shown below, together with the corresponding partition of the edges:

\vspace{1em}

\begin{center}
    \begin{tikzpicture}
    \draw (0,1.4) -- (0,0.7);
    \draw (-{0.7*sin(45)},2.1) -- (0,1.4) -- ({0.7*sin(45)},2.1);
     \draw (0,0.7) node {$\times$};
     \filldraw[black] (0,1.4) circle (1.5pt);
     \draw (2,2.1) -- (2,1.4) -- (2,0.7);
    \draw (-{0.7*sin(60)+2},2.1) -- (2,1.4) -- ({0.7*sin(60)+2},2.1);
     \filldraw[black] (2,0.7) circle (1.5pt);
     \filldraw[black] (2,1.4) circle (1.5pt);
     \draw (2,0.7) -- (2,0);
     \draw (2,0) node {$\times$};
    \draw (-{0.7*sin(45)+4},1.4) -- (4,0.7) -- ({0.7*sin(45)+4},1.4);
    \draw (4,0.7)--(4,0);
    \draw (4,0) node {$\times$};
     \filldraw[black] (4,0.7) circle (1.5pt);
     \filldraw[black] (-{0.7*sin(45)+4},1.4) circle (1.5pt);
     \filldraw[black] ({0.7*sin(45)+4},1.4) circle (1.5pt);
     \draw (6,0.7) -- (6,0)--(6,-0.7);
     \filldraw[black] (6,0) circle (1.5pt);
     \filldraw[black] (6,0.7) circle (1.5pt);
     \draw[black] (-0.5,1.1) -- (0.5,1.1);
     \draw[black] (1.5,1.1) -- (2.5,1.1);
     \draw[black] (-0.5,1.8) -- (0.5,1.8);
     \draw[black] (1.5,1.8) -- (2.15,1.8);
     \draw[black] (2.25,1.8) -- (2.5,1.8);
     \draw[black] (3.5,1.1) -- (4.5,1.1);
     \draw[black] (1.5,0.4) -- (4.5,0.4);
     \draw[black] (5.5,-0.3) -- (6.5,-0.3);
     \draw[black] (5.5,0.4) -- (6.5,0.4);
     \node [label={[xshift=-1.5cm, yshift=0.3cm]$F\colon$}] {};
     \node [label={[xshift=-0.8cm, yshift=0.7cm]$B_1$}] {};
     \node [label={[xshift=-0.8cm, yshift=1.4cm]$A_2$}] {};
     \node [label={[xshift=1.2cm, yshift=0.7cm]$B_2$}] {};
     \node [label={[xshift=1.2cm, yshift=0cm]$C_1$}] {};
     \node [label={[xshift=1.2cm, yshift=1.4cm]$A_1$}] {};
     \node [label={[xshift=2.8cm, yshift=1.4cm]$A_3$}] {};
     \node [label={[xshift=3.2cm, yshift=0.7cm]$B_3$}] {};
     \node [label={[xshift=5.2cm, yshift=-0.7cm]$D_1$}] {};
     \node [label={[xshift=5.2cm, yshift=0cm]$C_2$}] {};
\end{tikzpicture}
\end{center}

\label{partitioned_forest}
\end{example}

As Example \ref{partitioned_forest} shows, we can think of a general $k$-simplex of $\Fdec$ as a forest $F \in N\mathbb{F}_k$ together with a partition of its edges, and the face/degeneracy maps of $N \Fdec$ operate on the forest $F$ in the same as they do in $N\mathbb{F}$. As we will be particularly concerned with how these maps act, we will often omit the partition information of a $k$-simplex in $N\mathbb{F}$ and only write the associated forest $F$.

\begin{definition}
    Let $X$ be a dendroidal set. The \textit{symmetric monoidal envelope of $X$} is the simplicial set $\Proptensor(X)^\otimes$ with set of  $k$-simplices given by
    $$ \Proptensor(X)^\otimes_k = \coprod_{ F \in N \Fdec_k} X_{F},$$
    where we write $X_F = X_{T_1} \times \cdots \times X_{T_n}$ whenever $F$ is the forest $\bigoplus_{i=1}^n T_i$.

    The simplicial face and degeneracy maps act on $\Proptensor(X)^\otimes$ via the simplicial structure on the indexing simplicial set $\Fdec$. Here we are also using that the simplicial morphisms act on the forests $F \in N\mathbf{F}_k$ via dendroidal maps.
    \label{Infinity_Prop}
\end{definition}

Letting $\mathsf{part}\colon N\Fdec \to N\Fin_\ast$ be the functor that retains the information about the partitions, we can see $\Prop(X)^\otimes$ as a simplicial set over the nerve of finite pointed sets via the composite of 
$$ \Proptensor(X)^\otimes \longrightarrow N\Fdec \xlongrightarrow{\mathsf{part}} N\Fin_\ast,$$ 
where $\Proptensor(X)^\otimes \to N\Fdec$ just projects onto the respective component of the coproduct defining $N\Fdec$.

\begin{proposition}
    Let $X$ be a dendroidal $\infty$-operad. Then the functor $\Proptensor(X)^\otimes \to N\Fin_\ast$ exhibits $\Proptensor(X)^\otimes$ as a symmetric monoidal $\infty$-category in the sense of Lurie.
    \label{prop_x_cat}
\end{proposition}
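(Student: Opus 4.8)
To show that $\Proptensor(X)^\otimes \to N\Fin_\ast$ is a symmetric monoidal $\infty$-category in the sense of Lurie, I must verify two things: first, that the structure map $p\colon \Proptensor(X)^\otimes \to N\Fin_\ast$ is a coCartesian fibration of simplicial sets; and second, that for every $n \geq 0$ the $n$ inert morphisms $\rho^i\colon \langle n\rangle \to \langle 1\rangle$ (collapsing everything but $i$) induce an equivalence $\Proptensor(X)^\otimes_{\langle n\rangle} \xrightarrow{\;\sim\;} \bigl(\Proptensor(X)^\otimes_{\langle 1\rangle}\bigr)^n$ on fibers. The first step, and the main obstacle, is establishing that $p$ is an inner fibration and that the requisite coCartesian lifts exist; the second (the Segal condition) should be essentially immediate from the combinatorial description in Definition \ref{Infinity_Prop}, since the fiber over $\langle n\rangle$ is $\coprod X_F$ indexed by forests whose edges are partitioned into $n$ blocks, and this decomposes as a product over blocks.

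\textbf{Inner fibration and lifting.} The plan is to reduce checking the lifting properties of $p$ against inner horns $\Lambda^m_j \to \Delta^m$ (and, for coCartesian lifts, against the horns $\Lambda^m_0 \to \Delta^m$ with the appropriate edge marked) to the dendroidal statements of Lemmas \ref{rl_inner} and \ref{horn_inner}. A simplex $\Delta^m \to N\Fdec$ is a forest $F \in N\mathbb{F}_m$ with a partition of its edges, and a simplex $\Delta^m \to \Proptensor(X)^\otimes$ lifting it is precisely an element of $X_F$, i.e.\ a dendroidal map $F \to X$ (using $X_F = \dSets(F,X)$ since $F = \coprod T_i$ is a coproduct of representables). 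Under this dictionary, a lifting problem
\[
\begin{tikzcd}
\Lambda^m_j \arrow[r] \arrow[d] & \Proptensor(X)^\otimes \arrow[d, "p"] \\
\Delta^m \arrow[r] & N\Fin_\ast
\end{tikzcd}
\]
unwinds, after choosing the lift of $\Delta^m$ through $N\Fdec$ (which exists and is unique because $N\Fdec \to N\Fin_\ast$ is... well, one first lifts the bottom edge; the partition data on the remaining layers of $F$ is then freely specifiable, so I will need to observe that the horn already determines enough of it, or handle the choice), into a dendroidal lifting problem $\Lambda^j F \to X$ against $\Lambda^j F \to F$. For $0 < j < m$ this is exactly Lemma \ref{horn_inner}: the inclusion $\Lambda^j F \to F$ is inner anodyne, and $X$ is an $\infty$-operad, so the lift exists. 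Hence $p$ is an inner fibration. For the coCartesian lifts over an edge $e\colon \langle \ell\rangle \to \langle m\rangle$ of $N\Fin_\ast$ with a chosen starting vertex $\xi \in X_{F_0}$ ($F_0 = \ell\cdot\eta$ decorated), I will construct the candidate coCartesian edge explicitly — it is the image of $\xi$ under the dendroidal map determined by the corolla-forest representing $e$ (push forward along the corollas, which on $X$-values is given by the operad structure of $X$ encoded in its higher simplices) — and then verify the coCartesian lifting condition against $\Lambda^m_0 \to \Delta^m$ for all $m \geq 2$, which again reduces, after unwinding, to filling $\Lambda^0 F \to F$; this is the content of handling the $j=0$ (equivalently the $\drl$) case, and here I expect to invoke Lemma \ref{rl_inner} together with Theorem \ref{joyal_lifting}, exploiting that the relevant ``layer-0'' edges of $F$, being uprooted components or roots of corollas whose image records the coCartesian edge, behave like equivalences. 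This is the technically delicate point: one must check that the edge $X$ assigns along a corolla, viewed in the underlying $\infty$-category of a fiber, is an equivalence — but the fibers of $p$ over $\langle n\rangle$ are products of $\iota^* X$ (up to the partition, a disjoint union of copies indexed by colours), and the relevant $1$-corollas sit over identity-type morphisms in $\Fin_\ast$, so they are degenerate and hence equivalences.

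\textbf{The Segal condition.} For the second axiom, fix $n$ and consider the inert maps $\rho^i\colon \langle n\rangle \to \langle 1\rangle$. A vertex of $\Proptensor(X)^\otimes$ over $\langle n\rangle$ is a pair $((M_1,\dots,M_n), \xi)$ with $M_i = k_i\cdot\eta$ and $\xi \in X_{\bigoplus M_i} = \prod_i X_{M_i} = \prod_i (X_\eta)^{k_i}$; applying $\rho^i$ (whose coCartesian lift I described above, which over this edge just restricts to the $i$-th block) projects onto the $i$-th factor. Thus the map $\Proptensor(X)^\otimes_{\langle n\rangle} \to \prod_{i=1}^n \Proptensor(X)^\otimes_{\langle 1\rangle}$ is a bijection on vertices, and the same bookkeeping on higher simplices — a $k$-simplex over the constant $k$-simplex at $\langle n\rangle$ is a forest in $N\mathbb{F}_k$ with edges partitioned into $n$ blocks that are never merged, equivalently an $n$-tuple of such forests with a single block, decorated by an element of the product of the corresponding $X_F$'s — shows the comparison map is an isomorphism of simplicial sets, not merely an equivalence. (One should take a moment to confirm that the chosen coCartesian edges over the $\rho^i$ are indeed the ones used in Lurie's formulation, i.e.\ that restriction along $\rho^i$ is computed by the block-$i$ projection; this follows from the explicit description of coCartesian lifts in the previous step.) Combining the two axioms gives that $p$ exhibits $\Proptensor(X)^\otimes$ as a symmetric monoidal $\infty$-category, completing the proof. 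The only real work is the first bullet; everything in the Segal step is formal unwinding of Definition \ref{Infinity_Prop}.
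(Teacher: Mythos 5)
Your overall architecture matches the paper's: the three Lurie axioms, the reduction of inner horn filling to Lemma \ref{horn_inner} via the dictionary between simplices of $\Prop(X)^\otimes$ over a forest $F$ and dendroidal maps $F \to X$, the explicit construction of candidate cocartesian edges out of $1$-corollas, and the Segal condition via the block decomposition $\Fdec_{\langle n\rangle} \cong \prod_i \Fdec_{\langle 1\rangle}$ (which, as you note, gives an isomorphism, not just an equivalence). Parts (a) and (c) of your plan are essentially the paper's proof.

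The genuine gap is in the verification that your candidate edges are cocartesian, i.e.\ the filling of $\Lambda^0 G \to G$. You propose to ``invoke Lemma \ref{rl_inner} together with Theorem \ref{joyal_lifting}, exploiting that the layer-$0$ edges behave like equivalences,'' and you identify the $j=0$ case with ``the $\drl$ case.'' Neither tool fits as stated. First, $\Lambda^0 G = \bigcup_{i \neq 0}\partial_i G$ is not $\drl G = \partial_0 G \cup \partial_k G$; indeed $\drl G$ contains exactly the face $\partial_0 G$ that $\Lambda^0 G$ is missing, so Lemma \ref{rl_inner} says nothing directly about this inclusion. Second, Theorem \ref{joyal_lifting} concerns root horns $\Lambda^v T$ of a single tree with unary root vertex — the missing face is at the root — whereas $\partial_0 G$ removes the \emph{leaf} layer of the forest; you would need a leaf-horn analogue, plus a nontrivial filtration of $\Lambda^0 G \to G$ into inner anodynes and such special outer horns, none of which is set up in the paper. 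The paper avoids all of this: because the initial edge of $G$ is a forest of $1$-corollas carrying \emph{degenerate} elements of $X$ (not merely equivalences), $G$ is itself a degeneracy of one of its faces, and the required extension of $x\colon \Lambda^0 G \to X$ is written down explicitly as ``restrict to the face $\partial G$ contained in the horn, then pull back along the degeneracy $\sigma(\partial G) = G$.'' Your instinct that degeneracy is the key fact is right, but you route it through machinery that does not apply; either supply and prove the special-leaf-horn lemma you are implicitly assuming, or replace that step with the direct restrict-then-degenerate argument.
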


\begin{proof}
    Letting $\Phi$ denote the functor $\Proptensor(X)^\otimes \to N\Fin_\ast$, we need to check the following properties:
    \begin{enumerate}[label=(\alph*)]
        \item $\Phi$ is an inner fibration, which is equivalent to showing that $\Proptensor(X)^\otimes$ is an $\infty$-category.
        \item $\Phi$ is a cocartesian fibration.
        \item For each $n \geq 1$, the product of the functions $\rho^j \colon \langle n \rangle \to \langle 1 \rangle$, defined for each $1 \leq j \leq n$ and determined by the condition $(\rho^j)^{-1}(1)=\{j\}$, induces a map
        $$ \Proptensor(X)^\otimes_{\langle n \rangle} \longrightarrow \prod_{j=1}^n \Proptensor(X)^\otimes_{\langle 1 \rangle}$$
        which is an equivalence of $\infty$-categories.
    \end{enumerate}

   For (a) we need to construct horn fillers
   $$
        \begin{tikzcd}
{\Lambda^j[k]} \arrow[r] \arrow[d] & \Proptensor(X)^\otimes \\
{\Delta[k]} \arrow[ru, dashed]     &        
\end{tikzcd}
$$
    for the inner horn inclusions, that is, when $0<j<k$. We begin by observing that, by the construction of $\Proptensor(X)^\otimes$, there is a similar associated extension problem for the nerve of $\Fdec$
    $$ 
    \begin{tikzcd}
{\Lambda^j[k]} \arrow[r] \arrow[d] & N\Fdec, \\
{\Delta[k]} \arrow[ru, dashed]     &        
\end{tikzcd}.
    $$
    which has a (unique) solution, since it relates to the nerve of a category. As we have already discussed, one can think of this newly constructed $k$-simplex in $N\Fdec$ as a forest $F$, with each layer of edges partitioned in some fashion. Using this language, we are left with arguing that the following diagram in $\dSets$ 
    $$
\begin{tikzcd}
\Lambda^j F \arrow[r] \arrow[d] & X \\
F \arrow[ru, dashed]                                                                               &  
\end{tikzcd},
    $$
admits the dashed lift, where $\Lambda^j F$ are the horns of Definition \ref{definition_horns}. By Lemma \ref{horn_inner} we have that  $\Lambda^j F \to F$ is a dendroidal inner anodyne, and consequently the desired morphism exists due to $X$ being an $\infty$-operad.

For (b), suppose we are given a map of finite pointed sets $\beta\colon \langle n \rangle \to \langle k \rangle$ together with a lift of its source to $\Proptensor(X)^\otimes$. Such a lift corresponds to an object $(A_1, \ldots, A_n)$ in $\Fdec$, together with an $A$-tuple $ \vec{c} = (c_1, \ldots, c_A) \in X_\eta^{ \times A}$, where $A = \lvert A_1 \rvert + \cdots + \vert A_n \rvert$, coming from evaluating $X$ on the forest $\bigoplus_{i=1}^n A_i$. We also write $A_\ast$ for the sum of the cardinalities $\lvert A_i \rvert$ ranging over $i \in \alpha^{-1}(\ast)$.

We first define a lift $(B_1, \ldots, B_k)$ to $\Fdec$ of the finite pointed set $\langle k \rangle$: simply set $B_j = \bigoplus_{ i \in \beta^{-1}(j) } A_i$ for each $1 \leq j \leq k$, which is the empty forest whenever $\beta^{-1}(j) = \emptyset$. Let also $F \in N\mathbb{F}_1$ be the forest obtained by first extending the edges of $\bigoplus_{j=1}^k B_j$ to 1-corollas, and then adjoining $A_\ast$ components of uprooted trees of the form $\eta$. 

Having constructed the pair $(\beta, F)$, we define the desired 1-simplex $\tilde{\beta}$ in  $\Proptensor(X)^\otimes$ lifting $\beta$ as follows:
     \begin{enumerate}[label=(\roman*)]
         \item $\tilde{\beta}$ is indexed by the morphism $ (\beta,F) \colon (A_1, \ldots, A_n) \longrightarrow (B_1, \ldots, B_k)$ in $\Fdec$ that we defined in the pragraph above.
         \item For each $1 \leq j \leq k$, the value of $\tilde{\beta}$ on the 1-corollas coming from $B_j$ is the image via the dendroidal map induced from the degeneracy $C_1 \to C_0$
         $$ \prod_{i \in \beta^{-1}(j)} X_{A_i} \longrightarrow \prod_{i \in \beta^{-1}(j)} X_{C_1}^{ \times \lvert A_i \rvert}$$
         of the subtuple of $\vec{c}$ indexed by the elements of $A_i$.
         \item The value of $\tilde{\beta}$ on $A_{\ast} \cdot C_1$ is given by the remaining colours of $\vec{c}$ not mentioned in the previous case.
     \end{enumerate}

     We claim that $\tilde{\beta}$ is a $\Phi$-cocartesian lift of $\beta$, which comes down to checking that there is a diagonal lift in the diagram
     $$
\begin{tikzcd}
{\Lambda^0[m]} \arrow[d] \arrow[r] & \Proptensor(X)^\otimes \arrow[d, "\Phi"] \\
{\Delta[m]} \arrow[r]              & N\Fin_\ast                     
\end{tikzcd}
     $$
for $m \geq 2$, whenever the top horizontal map sends the initial edge $\Delta^{\{0,1\}}$ to $\tilde{\beta}$. Unravelling the definitions and in a similar fashion to what happened when proving part (a), this translates into showing the existence in $\dSets$ of the dashed arrow $\tilde{x}$ in 
$$
\begin{tikzcd}
\Lambda^0 G \arrow[r, "x"] \arrow[d] & X, \\
G \arrow[ru, dashed, "\tilde{x}"]            &  
\end{tikzcd}
$$
where $G \in N\mathbb{F}_m$ is a forest. Moreover, the initial edge of this $m$-simplex $(\partial_2 \cdots \partial_{m-1} \partial_m) G \in N\mathbb{F}_1$ is $F$, and the induced map $X_G \to X_F$ should send  $\tilde{x}$ to $\tilde{\beta}$.

We can assume that the forest $F$ only contains copies of $C_1$, or equivalently that $\beta^{-1}(\ast) = \ast$: these components coming from the fiber of the basepoint will only contribute to $G$ with uprooted trees of the form $\eta$, and for such components the extension problem is trivial. Assuming this, one can check that the desired extension is the image of $x$ via the composition of
    $$ X_{\Lambda^0 G} \xlongrightarrow{(\partial_{m-1} G \to \Lambda^0 G)^*} X_{\partial_{m-1} G} \xlongrightarrow{(\sigma_m \partial_{m-1} G \to \partial_{m-1} G)^*} X_{\sigma_m \partial_1 G} = X_G,$$
where $\sigma_m \colon N\Fdec_{m-1} \to N\Fdec_{m}$ denotes the last degeneracy map. In the last step we used the identification $\sigma_m \partial_1 G = G$, which is due to $F$ being a forest of 1-corollas.

     For part (c), we have an isomorphism of categories
     \begin{equation}
       \Fdec_{\langle n \rangle} \xlongrightarrow{\cong} \prod_{i=1}^n \Fdec_{\langle 1 \rangle}.
       \label{proj}
     \end{equation}
     Here $\Fdec_{\langle n \rangle}$ is the subcategory of $\Fdec$ over $\langle n \rangle$: a $k$-simplex in the nerve of this category is a forest of linear trees which has been partitioned into $n$ subforests, and \eqref{proj} keeps track of each of these subforests. Using this description, it readily follows that the induced map $$\Proptensor(X)^\otimes_{\langle n \rangle} \longrightarrow \prod_{i=1}^n \Proptensor(X)^\otimes_{\langle 1 \rangle}$$
     is an equivalence, as we wanted to show.
\end{proof}

\begin{proposition}
    Let $\mathbf{P}$ be a coloured operad. Then $\Proptensor(N\mathbf{P})^\otimes$ is the symmetric monoidal $\infty$-category associated to the (strict) symmetric monoidal category $\Prop(\mathbf{P})$.
    \label{Prop_P}
\end{proposition}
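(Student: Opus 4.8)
The plan is to identify $\Proptensor(N\mathbf{P})^{\otimes}$ with the nerve of the category of operators of the symmetric monoidal category $\Prop(\mathbf{P})$. Recall first that, following Lurie, the symmetric monoidal $\infty$-category associated to a symmetric monoidal category $(\mathcal{C}, \otimes)$ is the nerve $N(\mathcal{C}^{\otimes}) \to N\Fin_\ast$ of its \emph{category of operators} $\mathcal{C}^{\otimes}$: an object of $\mathcal{C}^{\otimes}$ over $\langle n \rangle$ is an $n$-tuple $(c_1, \dots, c_n)$ of objects of $\mathcal{C}$, a morphism over a pointed map $\alpha \colon \langle n \rangle \to \langle m \rangle$ is a family of morphisms $\varphi_i \colon \bigotimes_{j \in \alpha^{-1}(i)} c_j \to d_i$, one for each $i \in \{1, \dots, m\}$, and composition uses the composition and the monoidal product of $\mathcal{C}$. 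Unwinding this for $\mathcal{C} = \Prop(\mathbf{P})$ via Definition \ref{Prop_Strict}: an object of $\Prop(\mathbf{P})^{\otimes}$ over $\langle n \rangle$ is an $n$-tuple of strings of colours of $\mathbf{P}$, and a morphism over $\alpha$ is $\alpha$ together with, for each $i$, a function from the concatenation (over $\alpha^{-1}(i)$, ordered as in $\underline{n}$) of the input strings to the $i$-th output string, decorated by operations of $\mathbf{P}$ realising it. The goal is thus to produce an isomorphism $\Proptensor(N\mathbf{P})^{\otimes} \cong N(\Prop(\mathbf{P})^{\otimes})$ over $N\Fin_\ast$.

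The first step is to reduce the comparison to ordinary categories. Since $N(\Prop(\mathbf{P})^{\otimes})$ is a nerve it satisfies the Segal condition, i.e. every spine inclusion induces a bijection on sets of simplices; I claim $\Proptensor(N\mathbf{P})^{\otimes}$ does too. A $k$-simplex of $\Proptensor(N\mathbf{P})^{\otimes}$ is a forest $F \in N\Fdec_k$ together with an element of $(N\mathbf{P})_F$, that is, an operad map $\Omega(T) \to \mathbf{P}$ for each component tree $T$ of $F$; as $\mathbf{P}$ is an ordinary operad, such a map is precisely a colour of $\mathbf{P}$ on each edge of $T$ and an operation of $\mathbf{P}$ at each vertex of $T$, with matching input and output colours. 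On the one hand $N\Fdec$ is a nerve, so $F$ is reconstructed from its restrictions to the spine edges $\{\ell, \ell+1\}$, glued along the vertices; on the other hand, decorating $F$ amounts to decorating each of the $k$ layer-forests of $F$ and requiring agreement on the edges identified by the grafting that builds $F$, and agreement on those edges is exactly the gluing condition over the $0$-simplices. Hence the spine map of $\Proptensor(N\mathbf{P})^{\otimes}$ is a bijection, so $\Proptensor(N\mathbf{P})^{\otimes} = N\mathcal{D}$ for the category $\mathcal{D}$ whose objects and morphisms are the $0$- and $1$-simplices and whose composition is determined by the $2$-simplices. (This is consistent with Proposition \ref{prop_x_cat}: $N\mathbf{P}$ is in particular an $\infty$-operad, so $\Proptensor(N\mathbf{P})^{\otimes}$ is a symmetric monoidal $\infty$-category, which here happens to be strict.) It now suffices to give an isomorphism of categories $\mathcal{D} \cong \Prop(\mathbf{P})^{\otimes}$ over $\Fin_\ast$.

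Next I would match objects, morphisms and composition. An object of $\mathcal{D}$ is an object $(M_1, \dots, M_n)$ of $\Fdec$ together with a colour of $\mathbf{P}$ on each edge of $\bigoplus_i M_i$; reading off the coloured string carried by each $M_i$ identifies this with an $n$-tuple of colour-strings, i.e. an object of $\Prop(\mathbf{P})^{\otimes}$, preserving the index $\langle n \rangle$. A morphism of $\mathcal{D}$ is a morphism $(\beta, F) \colon (L_1, \dots, L_\ell) \to (M_1, \dots, M_m)$ of $\Fdec$ together with a decoration of $F$. Viewed as a morphism of $\mathbb{F}$, $F$ consists of one corolla $C_{p}$ for each root edge of $F$, the root edges of $F$ being identified with the edges of $\bigoplus_i M_i$, together with some uprooted copies of $\eta$; so a decoration of $F$ is the same as, for each such root edge $e$ carrying a colour $d$, an operation of $\mathbf{P}$ with output $d$ and input string the colours carried by the leaves over $e$, ordered by the canonical labelling of Remark \ref{labellings}, together with colours for the uprooted components. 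Regrouping this data by $i \in \{1, \dots, m\}$, and using property (b) in the definition of $\Fdec$, which identifies the leaves over $M_i$ with the edges of $\bigoplus_{\beta(j)=i} L_j$, yields exactly a morphism of $\Prop(\mathbf{P})^{\otimes}$ over $\beta$ — the uprooted components of $F$ corresponding to the summands with $\beta(j) = \ast$, which are discarded on the operator side. By the description of the faces of a forest in the previous subsection, $\partial_1 F$ (the leaf face) and $\partial_0 F$ (the root face) compute the source and target on each side, so these bijections respect source and target. Finally, composition in $\mathcal{D}$ is computed, via the Segal condition, by the unique inner-horn filler, which on the underlying forests is the grafting-composition of $N\Fdec$ and on the decorations is the grafting of operations along the coinciding edges, that is, the operadic composition of $\mathbf{P}$; this is precisely composition in $\Prop(\mathbf{P})^{\otimes}$. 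Therefore the bijections assemble into an isomorphism of categories over $\Fin_\ast$, as desired.

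I expect the crux to be the Segal/strictness step: verifying that decorating a forest obtained by grafting along its layers is equivalent to giving compatible decorations of the layers. This uses that operations of $\mathbf{P}$ compose uniquely along a grafting edge once the colour of that edge is fixed, so that no information is lost or added in passing to the spine, and it requires keeping careful track of the layer-and-partition bookkeeping of $N\Fdec$ — including the uprooted components coming from the basepoint fibre — against the $\{1, \dots, m\}$-indexed families describing morphisms in the category of operators. Everything else is a direct unwinding of the definitions.
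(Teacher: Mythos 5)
Your argument is correct, and it takes a more global route than the paper while filling in precisely the step the paper leaves to the reader. The paper's own proof works fibrewise: it unwinds Definition \ref{Infinity_Prop} to describe the fibre $\Proptensor(N\mathbf{P})^\otimes_{\langle 1\rangle}$ (objects are tuples of colours, morphisms are pairs consisting of a function of index sets and a tuple of operations, the monoidal structure is concatenation of forests via part (c) of Proposition \ref{prop_x_cat}), matches this against Definition \ref{Prop_Strict}, and then simply asserts that $\Proptensor(N\mathbf{P})^\otimes$ is ``the usual construction'' of the symmetric monoidal $\infty$-category associated to a strict one. You instead identify the entire simplicial set over $N\Fin_\ast$ with the nerve of the category of operators of $\Prop(\mathbf{P})$: your Segal-condition step --- that decorating a grafted forest by $N\mathbf{P}$ is the same as compatibly decorating its layers, because a map $\Omega(T) \to \mathbf{P}$ is just a colour on each edge and an operation at each vertex --- is exactly the content hidden in the paper's closing sentence, and your matching of morphisms over an arbitrary pointed map subsumes the paper's computation over $\langle 1 \rangle$. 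What your route buys is an actual proof of the final identification rather than an appeal to its being clear; what the paper's route buys is brevity and re-use of Proposition \ref{prop_x_cat}(c) for the monoidal structure. Two small remarks: the colours you attach to the uprooted components of $F$ are already determined by the source object, so they contribute no extra data (you handle this correctly by noting they are discarded on the operator side); and your assignment of source to $\partial_1$ and target to $\partial_0$ follows the standard simplicial convention and the paper's stipulation that the source sits at the leaves, but you should be aware that the paper's worked example of face maps on a $3$-simplex indexes the layers in the opposite order --- nothing in your argument depends on which convention is chosen, only on using one consistently.
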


\begin{proof}
    One can easily describe the associated symmetric monoidal category $\Proptensor(N \mathbf{P})^\otimes_{\langle 1 \rangle}$ from Definition \ref{Infinity_Prop} (note that since we are working over the object $\langle 1 \rangle$, the partition information does not exist):
    \begin{itemize}[label=$\diamond$]
    \item The set of objects of $\Proptensor(N \mathbf{P})^\otimes_{\langle 1 \rangle}$ is the coproduct $\coprod_{n \geq 0} N\mathbf{P}_\eta^{\times n},$
    and consequently an object is just a tuple of colours $(c_1, \ldots, c_n)$ of $\mathbf{P}$, for $n \geq 0$.
    \item A morphism $(c_1, \ldots, c_m) \to (d_1, \ldots, d_n)$ will be given by an $n$-tuple
    $$ (p_1, \ldots, p_n) \in X_{C_{k_1}} \times \cdots \times X_{C_{k_n}},$$
    for integers $k_i \geq 0$ satisfying $\sum_i k_i = m$. Moreover, the $c_i$'s and $d_j$'s are the values of $X$ on the leaves and roots of these corollas, respectively. 
    
This can be equivalently encoded by a function of sets $f \colon \underline{m} \to \underline{n}$ where $f(i)=j$ if $c_i$ and $d_j$ belong to the same corolla, and operations $p_j$ in $\mathbf{P}$ with output $d_j$ and inputs given by the $c_i$'s satisfying $f(i)=j$ ordered according to $\underline{n}$.

\item By the proof of part (c) of Proposition \ref{prop_x_cat}, the monoidal structure is induced by the concatenation of forests $(F, G) \mapsto F \oplus G$. On $\Proptensor(N \mathbf{P})^\otimes_{\langle 1 \rangle}$ this corresponds to the concatenation of tuples of colours.
\end{itemize}

A quick comparison with Definition \ref{Prop_Strict} readily shows that this coincides with $\Prop(\mathbf{P})$. It is also clear that $\Proptensor(N \mathbf{P})^\otimes$ is the usual construction of a symmetric monoidal $\infty$-category from a strict symmetric monoidal category, finishing the proof.
\end{proof}

\bibliographystyle{alpha}
\bibliography{bibliography.bib}
\end{document}